\newtheorem{thm}{Theorem}[section]
\newtheorem{cor}[thm]{Corollary}
\newtheorem{lemma}[thm]{Lemma}
\newtheorem{prop}[thm]{Proposition}
\newtheorem{definition}[thm]{Definition}
\theoremstyle{definition}
\newtheorem{remark}[thm]{Remark}
\newtheorem{eg}[thm]{Example}
\newtheorem{notation}[thm]{Notation}
\numberwithin{equation}{section}
\newtheorem{question}{Question}[section]
\def\NN{\mathbb{N}}
\DeclareMathOperator{\reg}{reg} 
\DeclareMathOperator{\het}{ht}
\DeclareMathOperator{\pmd}{pmd}
\DeclareMathOperator{\Tor}{Tor}
\DeclareMathOperator{\tpmd}{tpmd}
\begin{document}

\title[(Almost) Complete Intersection LSS-ideals and regularity of their powers]{(Almost) Complete Intersection Lov\'{a}sz$-${S}aks$-${S}chrijver ideals and regularity of their powers}
\author[Marie Amalore Nambi]{Marie Amalore Nambi}
\address{Department of Mathematics, Indian Institute of Technology Hyderabad, Kandi, Sangareddy - $502285$, India}
\email{amalore.p@gmail.com}

\author[Neeraj Kumar]{Neeraj Kumar}
\email{neeraj@math.iith.ac.in}

\author[Chitra Venugopal]{Chitra Venugopal}
\email{ma19resch11002@iith.ac.in}

\subjclass[2020]{{Primary 13F65, 13F70, 13C40}; Secondary {14M10, 13D02, 05E40}} 
\keywords{Complete intersection, almost complete intersection, Lov\'{a}sz$-${S}aks$-${S}chrijver(LSS) ideal, regularity, matching.}

\date{}

\begin{abstract}
We discuss the property of (almost) complete intersection of LSS-ideals of graphs of some special forms, like trees, unicyclic, and bicyclic graphs. Further, we give a sufficient condition for the complete intersection property of twisted LSS-ideals in terms of a new graph theoretical invariant called twisted positive matching decomposition number denoted by $\tpmd$. 
\end{abstract}

\maketitle

\section*{Introduction}

Let $G$ be a graph on $[n]=\{1,\ldots,n \}$, $k$ be a field and $d \geq 1$ be an integer. The Lov\'{a}sz$-$ {S}aks$-${S}chrijver ideal (cf. \cite{HMSW, LSS89}) in the polynomial ring $S=k[x_{ij} \mid i \in [n], j \in [d]]$ is defined as $$L_{G}^k(d)= ( f_e^{(d)}= \sum_{\ell=1}^{d}x_{i\ell}x_{j\ell} \mid e= \{i,j\} \in E(G)).$$
We refer to it as LSS-ideal in short and denote it by $L_{G}(d)$ when the field $k$ is evident. It defines the variety of orthogonal representations of the complementary graph of $G$ (cf. \cite{HMSW, LSS89}). The ideal  $L_G(1)$ coincides with the edge ideal of a graph $G$. Another way of looking at the LSS-ideals is as the defining ideal of a symmetric algebra since the generators of $L_G(d)$ have a degree at most $1$ in each variable. 

The twisted LSS-ideal (cf. \cite{CW2019}, p.475) in the polynomial ring $S=k[x_{ij} \mid i \in [n], j \in [2d]]$ is defined as 
$$\hat{L}_{G}^k(d)= ( \hat{f}_e^{(d)}= \sum_{\ell=1}^{d}x_{i2\ell -1}x_{j2\ell}-x_{i2\ell }x_{j2\ell-1} \mid e= \{i,j\} \in E(G)).$$
We denote it by $\hat{L}_{G}(d)$ when the field $k$ is evident. Clearly, the ideal $\hat{L}_{G}(1)$ coincides with the binomial edge ideal of a graph $G$ (cf. \cite{HH, O2011}).

In \cite{CW2019}, the authors show that $\hat{L}_G(d)$ is isomorphic to $L_G(2d)$ when $G$ is bipartite and hence the algebraic properties of being prime and radical transforms from one to the other for all $d$.

\vspace{2mm}

LSS-ideals and twisted LSS-ideals have a close relationship with some classes of ideals associated with graphs, like the determinantal ideals of the $(d+1)$-minors of generic/ generic symmetric matrices with $0$s in positions corresponding to the edges of graph $G$ (denoted by $X^{gen}_{G}/X^{sym}_{G}$ respectively) and Pfaffian ideals of order $2d$ of generic skew-symmetric matrices with entries prescribed by the edges of $G$ (denoted by $X^{skew}_{G}$) respectively. This is evident from Remark \ref{Isomorphisms}, which involves isomorphisms discussed in \cite{CW2019}. 

\vspace{2mm}
 
 An ideal $I$ of a ring $R$ is said to be a complete intersection if the minimal number of generators of $I$ is equal to its height. In \cite{CW2019}, the authors introduce a graph theoretical invariant called the positive matching decomposition number denoted by $\pmd$ (see Section \ref{section 1} for definition), which helps in the study of complete intersection property of LSS-ideals. In fact, the authors prove the following implications.
 $$ d \geq \pmd(G) \Rightarrow L_{G}(d) \text{ is a radical complete intersection} \Rightarrow L_G(d+1) \text{ is prime}.$$

Since the primality of the LSS-ideals implies the irreducibility of the corresponding variety of orthogonal representations, the study of $\pmd$ of graphs have applications in both algebra and geometry. More results on the $\pmd$ of graphs and hypergraphs are given in \cite{ANpmd, FG2022, GW2023}.

\vspace{2mm}

Algebraic properties of the ideal $L_G(2)$, such as primary decomposition, radical, prime, and complete intersections are studied in terms of the combinatorial invariants of $G$ in \cite{CW2019, HMSW, AK2021}. 
In \cite{CW2019}, Conca and Welker try to answer the questions: When is $L_G(d)$ radical, prime, and complete intersection? In this direction, the authors completely characterize the above properties when $G$ is a forest, in terms of the maximal degree of vertices in $G$ and $d$ (see Remark \ref{R,CI,Prime}).

\vspace{2mm}

In 
Section \ref{section 2}, we characterize complete intersection LSS-ideals corresponding to unicyclic and bicyclic graphs. Here $\Delta(G)$ is the maximal degree of the vertices in $G$.
\begin{thm} \label{CI_LSS}
    Let $G$ be a graph and $d$ be a positive integer. 
    \begin{enumerate}[(a)]
        \item \label{thm1.1} If $G$ is unicyclic with $d\geq 3$, then $L_G(d)$ is a complete intersection if and only if $d \geq \Delta(G)$.
    
        \item \label{thm1.3} If $G$ is bicyclic with $d \geq 4$, then $L_{G}(d)$ is a complete intersection if and only if $d \geq \Delta(G)$.

    \end{enumerate}    
\end{thm}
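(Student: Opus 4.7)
The argument has two halves. The necessary direction follows the localisation argument already used in the forest case of \cite{CW2019}: if $L_G(d)$ is a complete intersection and $v$ is a vertex of degree $\Delta(G)$, then the $\Delta(G)$ generators $f_e^{(d)}$ with $v \in e$ form a subset of a regular sequence, hence themselves a regular sequence, and they are all contained in the height-$d$ prime $(x_{v1},\ldots,x_{vd})$. Comparing heights gives $\Delta(G) \le d$. This argument is insensitive to cyclic structure, so it covers both (a) and (b) uniformly.

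The sufficient direction is where the new content lies. The plan is to invoke the implication $d \ge \pmd(G) \Rightarrow L_G(d)$ is a (radical) complete intersection from \cite{CW2019}, after establishing the combinatorial bounds
$$\pmd(G) \le \max(3,\Delta(G)) \quad (G \text{ unicyclic}), \qquad \pmd(G) \le \max(4,\Delta(G)) \quad (G \text{ bicyclic}).$$
Under the hypotheses $d \ge 3$ (resp.\ $d \ge 4$) together with $d \ge \Delta(G)$, both bounds collapse to $\pmd(G) \le d$, and the Conca--Welker implication completes the proof.

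The combinatorial bound on $\pmd$ is the main obstacle. For a unicyclic graph $G$ with unique cycle $C$, I would delete one edge $e_0 \in C$ to obtain a spanning tree $T$, apply the forest case of \cite{CW2019} to decompose $E(T)$ into at most $\Delta(G)$ positive matchings with respect to some vertex ordering (a BFS/DFS ordering from a root on the cycle is a natural choice), and then show that $e_0$ can be absorbed either into an existing colour class or at the cost of a single additional class; the latter is unavoidable exactly when $G = C_n$ with $n$ odd and $\Delta(G) = 2$, which is what forces the bound $3$. The delicate step is to select one global vertex ordering that simultaneously witnesses positivity of \emph{every} matching in the decomposition, not merely the existence of an edge colouring.

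For the bicyclic case, the plan is a case analysis on the three standard shapes of a bicyclic graph (two cycles sharing a vertex, two cycles sharing a path, or two vertex-disjoint cycles joined by an internal path). In each shape one deletes one edge from each cycle, applies the unicyclic (or tree) decomposition to the remainder, and then reinserts the two extra edges, each costing at most one additional colour class beyond $\Delta(G)$. The worst case is when both cycles are odd and $\Delta(G) \le 3$, where two extra classes are needed, and this is precisely what produces the bound $\max(4,\Delta(G))$. Once again, the technical heart is exhibiting a single vertex ordering under which all colour classes are positive matchings in the sense of \cite{CW2019}; this step is expected to be the most delicate part of the proof.
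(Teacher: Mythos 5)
Your overall architecture is the same as the paper's: necessity via a height argument (your version --- the $\Delta(G)$ generators through a maximum-degree vertex $v$ form a regular sequence inside the height-$d$ prime $(x_{v1},\ldots,x_{vd})$ --- is valid and is a legitimate variant of the paper's Lemma \ref{notCI}, which instead exhibits a prime of height at most $\mu(L_G(d))-1$ containing the ideal), and sufficiency via exactly the bounds $\pmd(G)\leq\max\{3,\Delta(G)\}$ (unicyclic) and $\pmd(G)\leq\max\{4,\Delta(G)\}$ (bicyclic) combined with Remark \ref{pmdCI}. The problem is that your proposed proof of these $\pmd$ bounds --- the only genuinely new content --- does not work as sketched. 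A pm-decomposition is sequential: $M_i$ must be a positive matching on $(V(G),E(G)\setminus\cup_{j<i}M_j)$, so positivity of each class is judged in the presence of all edges not yet removed. You therefore cannot decompose the spanning tree $T=G\setminus e_0$ in isolation and then append or absorb $e_0$ afterwards. Concretely, take $G=C_4$ with edges $\{1,2\},\{2,3\},\{3,4\},\{1,4\}$ and $e_0=\{1,4\}$: the tree decomposition $M_1=\{\{1,2\},\{3,4\}\}$, $M_2=\{\{2,3\}\}$ is fine in $T$, but $M_1$ is \emph{not} a positive matching of $G$, since $1,2,3,4,1$ is an $M_1$-alternating closed walk (Remark \ref{pos.match}); appending $\{e_0\}$ as a third class does not yield a pm-decomposition of $G$. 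Moreover, your side claim that the extra class is forced only for odd cycles is false: $\pmd(C_n)=3$ for even $n$ as well, as this example shows. Worse, when $\Delta(G)\geq 3$ the target bound allows no extra class at all, so "absorption into an existing class" is precisely the statement that needs proof, and your sketch leaves it (by your own admission) unresolved.

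The paper's construction (Theorem \ref{Thm:UCI}) resolves this with a different ordering of the work: the \emph{first} matching $M_1$ consists of one edge of the cycle together with non-cycle edges chosen greedily so that every vertex of current maximum degree loses an edge. Since $M_1$ contains only one cycle edge, it admits no alternating closed walk and is positive by Remark \ref{pos.match}; and since $M_1$ does double duty --- breaking the cycle \emph{and} lowering the maximum degree --- the remainder $G\setminus M_1$ is a forest with $\Delta(G\setminus M_1)=\max\{2,\Delta(G)-1\}$, so the forest formula $\pmd=\Delta$ gives $\pmd(G)\leq 1+\max\{2,\Delta(G)-1\}=\max\{3,\Delta(G)\}$, with the bicyclic case recursing through the unicyclic one (no case analysis on the shapes of bicyclic graphs is needed). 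In your scheme the tree may already consume $\Delta(T)=\Delta(G)$ classes, leaving no budget for $e_0$; the degree-reducing first matching is the idea your proposal is missing, and without it the sufficiency half is a genuine gap rather than a routine verification.
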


In Section \ref{section 2}, we also define a graph theoretical invariant, called the twisted positive matching decomposition number ($\tpmd$), similar to $\pmd$, which helps in the study of twisted LSS-ideals. We give a sufficient condition for the twisted LSS-ideals to be radical complete intersections in terms of this new invariant. That is, we prove the following implication (see Theorem \ref{CI_pmd}).
$$ d \geq \tpmd(G) \Rightarrow \hat{L}_{G}(d) \text{ is a radical complete intersection}.$$

\vspace{2mm}

An ideal $I$ of a ring $R$ is said to be an almost complete intersection if the minimal number of generators of $I$ is one more than the height of $I$ along with the property that for all minimal primes $\mathfrak{p}$ of $I$ in $A$, $I_{\mathfrak{p}}$ is a complete intersection. In \cite{AK2021}, for $d=2$, Kumar characterizes graphs whose LSS-ideals are almost complete intersections and studies Cohen-Macaulayness of the Rees algebra of almost complete intersection LSS-ideals. 
In this article, for all $d\geq 2$, we characterize almost complete intersection LSS-ideals corresponding to trees and $C_3$-free connected unicyclic graphs (see Theorems \ref{ACI:tree}, and \ref{ACI:unicyclic}).

\vspace{2mm} 
 
Castelnuovo-Mumford regularity is an important algebraic invariant which measures the complexity of modules. 
It is well known that for any homogeneous ideal $I$, the regularity of $I^s$ is of the form $as+b$ for $s \gg 0$ and $a,b$ being non-negative constants, see \cite{CHT1999, V2000}. The value of $a$ is well understood in literature, whereas computing $b$ is found to be a difficult problem in general. For some classes of graph $G$, the regularity of binomial ideals ($L_G(2)$) and their powers are studied in \cite{AN,ANcycle,VRT2021,JAR2019,A2021reg,MM,SZ}. It is observed that characterizing complete intersections and almost complete intersections helps in the study of the regularity of ideals and their powers.  
 In Proposition \ref{rem:CIreg}, we obtain the constant $b$ for LSS-ideals of trees and unicyclic graphs in terms of the number of vertices of the graph.  
We give a lower bound for the regularity of powers of LSS-ideals in terms of certain invariants corresponding to its induced subgraphs, see Proposition \ref{pro:ij}. Also, we obtain bounds for the regularity of powers of almost complete intersection LSS-ideals associated with trees, unicyclic and bicyclic graphs, see Theorems \ref{thm4:t}, and \ref{thm4:ub}. 

\vspace{4mm}

 \noindent {\bf Acknowledgement.} We thank the anonymous referee for their valuable comments and suggestions. The first author is financially supported by the University Grant Commission, India. The Core Research Grant (CRG/2023/007668) from the Science and Engineering Research Board, ANRF, India, partially supports the second author. The third author is financially supported by INSPIRE fellowship, DST, India.

\section{Preliminaries} \label{section 1}
Throughout the article, unless otherwise stated, $d$ denotes a positive integer, and $G$ a finite simple undirected graph with vertex set $V(G)=[n]$ and edge set $E(G)$.

\vspace{2mm}

\noindent {\textbf{Definitions and Notations.}}
\begin{itemize}
    \item A \textit{subgraph} of $G$ is a graph $H$ such that $V(H) \subset V(G)$ and $E(H) \subset E(G)$.
    \item For $U \subset V(G)$, $G[U]$ denotes the \textit{induced subgraph} of $G$ on vertex set $U$. For $i,j \in U$, $\{i,j\} \in E(G[U])$ if and only if $\{i,j\} \in E(G)$. For a vertex $u \in V(G)$, $G \setminus u$ denotes the induced subgraph on $V(G) \setminus u$.
    \item For $m,n >0$, $K_n$ denotes the \textit{complete} graph on $[n]$. $K_{m,n}$ denotes the \textit{complete bipartite} graph on $[m+n]$. For $n>2$, $C_n$ denotes the \textit{cycle} on $[n]$.
    \item A graph $G$ is said to be a \textit{forest} if it does not have a cycle as a subgraph and a \textit{tree} if it is connected.
    \item A graph $G$ is said to be a \textit{unicyclic graph} if $G$ contains precisely one cycle as a subgraph.
    \item A graph $G$ is said to be a \textit{bicyclic graph} if $G$ contains exactly two cycles as a subgraph.
    \item For a vertex $v \in V(G)$, \textit{degree} of a vertex, denoted by $deg_{G}(v)$, is the number of edges incident to $v$.
    \item For a graph $G$, $\Delta(G)= \max_{v\in V(G)} \deg_G(v)$. 
\end{itemize}

\vspace{2mm}

The following remark discusses the relations between the LSS-ideals and the twisted LSS-ideals with the determinantal ideals and the Pfaffian ideals, respectively.  

\begin{remark} \label{Isomorphisms}
For a matrix $X$ with variables as entries, let $I_l(X)$ denote the ideal of $K[X]$ generated by the $l$-minors of $X$, and for a generic skew-symmetric matrix $X'$, let Pf$_{l}(X')$ denote the Pfaffian ideal of order $l$ in $K[X']$ which is generated by the square roots of the determinants of the submatrices of $X'$ obtained by considering its $l$-rows and the corresponding $l$-columns.
    \begin{enumerate} [(a)]
    \item \label{iso1} Let $G$ be a subgraph of a complete bipartite graph $K_{m,n}$ where $m,n \in \NN$, then $K[x_{ij}]/(I_{d+1}(X^{gen}_{G})+(x_{ij} \, | \{i,j\} \in E)) \cong K[YZ]/L_G(d) \cap K[YZ]$ where $Y=(y_{ij})$ and $Z=(z_{ij})$ are $m \times d$ and $d \times n$ matrices of variables respectively.
    \item \label{iso2} Let $G$ be a subgraph of a complete graph $K_{n}$ where $n \in \NN$, then $K[x_{ij}]/(I_{d+1}(X^{sym}_{G})+(x_{ij} \, | \{i,j\} \in E)) \cong K[YY^{T}]/L_G(d) \cap K[YY^{T}]$ where $Y=(y_{ij})$ is an $n \times n$ matrix of variables.
    \item \label{iso3} Let $G$ be a subgraph of a complete graph $K_{n}$ where $n \in \NN$ and for $\hat{f}_e^{(d)}=\sum_{k=1}^d(y_{i\, 2k-1}y_{j \, 2k}-y_{i \, 2k}y_{j \, 2k-1})$, let $\hat{L}_G(d)=\{ \hat{f}_e^{(d)}: \, e \in E \}$ be the twisted LSS-ideal associated to $G$. 
    Then $$K[x_{ij}]/(\text{Pf}_{2d+2}(X^{skew}_{G})+(x_{ij} \, | \{i,j\} \in E)) \cong K[YJY^{T}]/ \hat{L}_G(d) \cap K[YJY^{T}]$$ where $Y=(y_{ij})$ is an $n \times 2d$ matrix of variables and $J$ is a $2d \times 2d$ block matrix with $d$ blocks of $\begin{pmatrix}
                 0 & 1 \\
                -1 & 0 \\
        \end{pmatrix}$ on the diagonal and $0$ in the remaining positions. 
\end{enumerate}
\end{remark}

\vspace{2mm}

The property of LSS ideals corresponding to forests being radical, prime, and complete intersections is mentioned in the following remark, which is referred to repeatedly in this article.

\begin{remark} \cite[Theorem 1.5]{CW2019} \label{R,CI,Prime} Let $G$ be a forest and denote by $\Delta(G)$ the maximal degree of a vertex in $G$. Then
\begin{enumerate}[(a)]
    \item $L_G(d)$ is radical for all $d$.
    \item $L_G(d)$ is a complete intersection if and only if $d \geq \Delta(G)$.
    \item $L_G(d)$ is prime if and only if $d \geq \Delta(G) +1$.
\end{enumerate}
\end{remark}

We now recall the definition of the positive matching decomposition number, as introduced by  Conca and Welker \cite{CW2019}. 

\begin{definition}
Let $G=(V(G),E(G))$ be a graph. A subset $M \subseteq E(G)$ is said to be a matching in $G$  if the edges in $M$ are pairwise disjoint. A matching decomposition of $G$ is a partition of the edge set $E(G)=\cup_{i=1}^{p} M_i$ into pairwise disjoint subsets, where each $M_i$ is a matching in $G$ for $i = 1,\ldots,p$. 
\end{definition}

\begin{definition} \cite[Definition 5.1]{CW2019}
    Given a graph $G=(V(G),E(G))$ a positive matching of $G$ is a subset $M \subseteq E(G)$ of pairwise disjoint sets such that there exists a weight function $w : V(G) \rightarrow \mathbb{R}$ satisfying:
    \begin{equation*}
      \sum_{i\in e} w(i)>0 \text{ if } e \in M, \hspace{2cm} \sum_{i\in e} w(i)<0 \text{ if } e \in E\setminus M. 
    \end{equation*}
\end{definition}


\begin{definition} \cite[Definition 5.3]{CW2019}
Let $G=(V(G),E(G))$ be a graph. A positive matching decomposition of $G$ is a partition $E(G) = \cup_{i=1}^{p} M_i$ into pairwise disjoint subsets such that $M_i$ is a positive matching on $(V(G), E(G) \setminus \cup_{j=1}^{i-1} M_j)$ for $i = 1,\ldots,p$. 
The smallest $p$ for which $G$ admits a pm-decomposition with $p$ parts will be denoted by $\pmd(G)$.
\end{definition}

Important properties of the $\pmd$ of graphs, along with the characterization of a positive matching, are recalled below.

\begin{remark} \cite[Theorem 1.3]{CW2019} \label{pmdCI}
    Let $G$ be a graph. Then
 for $d \geq \pmd(G)$, the ideal $L_G(d)$ is a radical complete intersection. In particular, $L_G(d)$ is prime if $d \geq \pmd(G) + 1$.
\end{remark}

\begin{remark} \cite[Lemma 5.4.(3)]{CW2019} \label{rem.pmdforest}
    Let $G$ be a graph. Then $\pmd(G) \geq \Delta(G)$ and attains equality if $G$ is a forest.
\end{remark}

\begin{remark} \cite[Theorem 2.1]{FG2022} \label{pos.match}
    Let $G$ be a graph. A matching $M$ of a graph $G$ is positive if and only if the subgraph of $G$ induced by $M$ has no alternating closed walks with respect to $M$.
\end{remark}

In the following, we recall basic definitions and results from commutative algebra.

\begin{remark} \cite[Lemma 2.2]{reg-seq} \label{rem.reg.seq}
    Let $R$ be a commutative ring equipped with a term order $<$. Let $I$ be an ideal of $R$. If the initial ideal $\textnormal{in}_{<}(I)$ is radical, a complete intersection or prime, then $I$ shares the same property.
\end{remark}

\begin{remark}\cite[Lemma 4.1]{AK2021} \label{colonp} Let $I$ be a radical ideal in a Noetherian commutative ring $R$. Then, for any
$f \in R$ and $n \geq 2$, $$I : f = I : f^n.$$
\end{remark}

\begin{remark}\cite[Lemma 4.2]{AK2021} \label{CIorACI} If $I$ is a homogeneous ideal in a polynomial ring such that $I = J + (a)$, where $J$ is generated by a homogeneous regular sequence, $a$ is a homogeneous element and
$J: a = J: a^2$, then $I$ is either a complete intersection or an almost complete intersection.
\end{remark}

\begin{definition}
    Let $S$ be a standard graded polynomial ring over a field $k$ and 
    $M$ a finitely generated graded $S$-module. 
    Then the \emph{Castelnuovo-Mumford regularity} or simply  \emph{regularity} of $M$ over $S$, denoted by $\reg_S (M)$, is defined as
    $$\reg_S(M) = \max \{j-i \mid \Tor_i^S(M,k)_{j} \neq 0\}.$$
    For convenience, we shall use $\reg(M)$ instead of $\reg_S(M)$.
\end{definition}

\begin{remark}\cite[Lemma 4.4]{BHT15} \label{regular}
    Let $u_1,\ldots,u_n$ be a regular sequence of homogeneous polynomials in $S$ with $\deg(u_i)=d$. Let $I= (u_1,\ldots,u_n)$ be an ideal. Then for all $s \geq 1$, we have 
    $$\reg(I^s)=ds+(d-1)(n-1).$$
\end{remark}

\begin{remark} \cite[Corollary 2.11]{JAR2019} \label{Reg.upper}
     Let $S$ be a standard graded polynomial ring over a field $k$ and $u_1,\ldots,u_n$ be a homogeneous $d$-sequence with $\deg(u_i)= d_i$ in $S$ such that $u_1,\ldots, u_{n-1}$ is a regular sequence. Set $I=(u_1,\ldots,u_n)$ and $d=\max\{d_i : 1 \leq i \leq n\}.$ Then, for all $s \geq 1$, 
    $$\reg(S/I^s) \leq d(s-1) + \max\{\reg(S/I),\sum_{i=1}^{n-1}d_i-n\}.$$
\end{remark}

\section{Complete Intersection} \label{section 2}

We begin this section by introducing the notion of a twisted positive matching decomposition of a graph $G$ and give a sufficient condition for the complete intersection property of twisted LSS-ideals in terms of this graph theoretical invariant.

\vspace{2mm}

\begin{definition} \label{def.tmd}
    Let $G=(V(G),E(G))$ be a graph and $p$ be a positive integer. 
    \begin{enumerate}
        \item[(a)] A twisted matching decomposition of $G$ is a partition $E(G)=\cup_{\ell=1}^{2p}M_\ell$ into pairwise disjoint subsets such that 
\begin{equation} \label{eq.matchcond}
    \text{ if } \{i,j\}\in M_{2q-1} \text{ then } \{k_1,i\} \text{ and } \{j,k_2\} \notin M_{2q}
\end{equation} 
for all $q=1,\ldots,p$, where $i,j,k_1,k_2 \in V(G)$ with $k_1<i<j<k_2$, and $M_{2q}$ can be empty set.
\item[(b)] Let $E(G)=\cup_{\ell=1}^{2p}M_\ell$ be a twisted matching decomposition of $G$. For $q=1,\ldots,p$,  let $H_q$ be a graph corresponding to the pair $(M_{2q-1},M_{2q})$, where the vertex set and the edge set are 
$$V(H_q)=\{1_{2q-1},2_{2q-1},\ldots,n_{2q-1},1_{2q},2_{2q},\ldots,n_{2q}\} \text{ and }
E(H_q)=\begin{cases}
    \{i_{2q-1},j_{2q}\} & \text{ if } \{i,j\}\in M_{2q-1}\\
    \{j_{2q-1},i_{2q}\} & \text{ if } \{i,j\}\in M_{2q},\\
\end{cases}$$
where $i,j \in V(G)(i<j)$.
    \end{enumerate} 
\end{definition}

Observe that, for a graph $G$, for all $i,j \in V(G)$ $(i \neq j)$, if $\{i_{2q-1},j_{2q}\} \in E(H_q)$ then $\{j_{2q-1},i_{2q}\} \notin E(H_q)$, and $\{i_{2q-1},i_{2q}\} \notin E(H_q)$.

\begin{eg} \label{eg.hq}
    Let $G$ be the graph as given in Figure \ref{fig.sq}. We present a twisted matching decomposition for the graph $G$. If $M_1=\{\{1,2\}\}$, $M_2=\{\{1,4\}\}$, $M_3=\{\{2,3\}\}$, and $M_4=\{\{1,3\},\{2,4\}\}$ then $E(G)=M_1 \cup \cdots \cup M_4$ is a twisted matching decomposition of $G$. Note that the edge $\{2,3\}$ cannot be included in $M_2$, even though  $\{\{1,4\},\{2,3\}\}$ is a matching in $G$, as dictated by Equation (\ref{eq.matchcond}).

\begin{figure}[ht]
    \centering

\tikzset{every picture/.style={line width=0.75pt}} 

\begin{tikzpicture}[x=0.75pt,y=0.75pt,yscale=-1,xscale=1]

\draw    (190,323) -- (190.5,408.44) -- (283.5,408.44) ;
\draw    (190,323) -- (284.5,324.44) ;
\draw    (284.5,324.44) -- (190.5,408.44) ;
\draw    (191.5,324.44) -- (283.5,408.44) ;

\draw (173,306) node [anchor=north west][inner sep=0.75pt]  [font=\small]  {$1$};
\draw (173,404) node [anchor=north west][inner sep=0.75pt]  [font=\small]  {$2$};
\draw (283.5,408.44) node [anchor=north west][inner sep=0.75pt]  [font=\small]  {$3$};
\draw (288,307) node [anchor=north west][inner sep=0.75pt]  [font=\small]  {$4$};

\filldraw[black] (190,323) circle (1.5pt) ;
\filldraw[black] (284.5,324.44) circle (1.5pt); 
\filldraw[black] (283.5,408.44) circle (1.5pt); 
\filldraw[black] (190.5,408.44) circle (1.5pt); 
\end{tikzpicture}
\caption{The graph $G$}
\label{fig.sq}
\end{figure}

Below, we present the graphs $H_1$ and $H_2$ corresponding to the pair $(M_1,M_2)$ and $(M_3,M_4)$, respectively.
\begin{figure}[ht]
    \centering

\tikzset{every picture/.style={line width=0.75pt}} 

\begin{tikzpicture}[x=0.75pt,y=0.75pt,yscale=-1,xscale=1]
 
\draw    (180,300) -- (250,330) ;
\draw    (180,390) -- (250,300) ;

\draw    (380,330) -- (450,360) ;
\draw    (380,360) -- (450,300) ;
\draw    (380,390) -- (450,330) ;

\draw (160,290) node [anchor=north west][inner sep=0.75pt]  [font=\small]  {$1_1$};
\draw (160,320) node [anchor=north west][inner sep=0.75pt]  [font=\small]  {$2_1$};
\draw (160,350) node [anchor=north west][inner sep=0.75pt]  [font=\small]  {$3_1$};
\draw (160,380) node [anchor=north west][inner sep=0.75pt]  [font=\small]  {$4_1$};
\draw (255,290) node [anchor=north west][inner sep=0.75pt]  [font=\small]  {$1_2$};
\draw (255,320) node [anchor=north west][inner sep=0.75pt]  [font=\small]  {$2_2$};
\draw (255,350) node [anchor=north west][inner sep=0.75pt]  [font=\small]  {$3_2$};
\draw (255,380) node [anchor=north west][inner sep=0.75pt]  [font=\small]  {$4_2$};
\draw (210,400) node [anchor=north west][inner sep=0.75pt]  [font=\small]  {$H_1$};

\draw (360,290) node [anchor=north west][inner sep=0.75pt]  [font=\small]  {$1_3$};
\draw (360,320) node [anchor=north west][inner sep=0.75pt]  [font=\small]  {$2_3$};
\draw (360,350) node [anchor=north west][inner sep=0.75pt]  [font=\small]  {$3_3$};
\draw (360,380) node [anchor=north west][inner sep=0.75pt]  [font=\small]  {$4_3$};
\draw (455,290) node [anchor=north west][inner sep=0.75pt]  [font=\small]  {$1_4$};
\draw (455,320) node [anchor=north west][inner sep=0.75pt]  [font=\small]  {$2_4$};
\draw (455,350) node [anchor=north west][inner sep=0.75pt]  [font=\small]  {$3_4$};
\draw (455,380) node [anchor=north west][inner sep=0.75pt]  [font=\small]  {$4_4$};
\draw (410,400) node [anchor=north west][inner sep=0.75pt]  [font=\small]  {$H_2$};


\filldraw[black] (180,300) circle (1.5pt) ;
\filldraw[black] (180,330) circle (1.5pt); 
\filldraw[black] (180,360) circle (1.5pt); 
\filldraw[black] (180,390) circle (1.5pt); 
\filldraw[black] (250,300) circle (1.5pt) ;
\filldraw[black] (250,330) circle (1.5pt); 
\filldraw[black] (250,360) circle (1.5pt); 
\filldraw[black] (250,390) circle (1.5pt); 

\filldraw[black] (380,300) circle (1.5pt) ;
\filldraw[black] (380,330) circle (1.5pt); 
\filldraw[black] (380,360) circle (1.5pt); 
\filldraw[black] (380,390) circle (1.5pt); 
\filldraw[black] (450,300) circle (1.5pt) ;
\filldraw[black] (450,330) circle (1.5pt); 
\filldraw[black] (450,360) circle (1.5pt); 
\filldraw[black] (450,390) circle (1.5pt); 
\end{tikzpicture}
\caption{The graphs $H_1$ and $H_2$}
\label{fig.hq}
\end{figure}
\end{eg}

\begin{remark}
    The motivation behind Equation (\ref{eq.matchcond}) is purely algebraic. Specifically, it aids in obtaining a monomial order on $S$ such that the leading terms of the elements in the minimal generating set of the twisted LSS-ideal are pairwise coprime.
\end{remark}

\begin{remark}
    The graph $H_q$ defined in Definition \ref{def.tmd} is constructed for computational purposes. More precisely, by using Remark \ref{pos.match}, one can conclude the existence of a positive map (twisted positive map) on the graph $H_q$ with respect to the given matching. For further details, see Example \ref{eg.posmap}.
\end{remark}

\begin{definition} \label{def.tpmd}
    Let $G$ be a graph. Given a graph $H_q$ with respect to a twisted matching decomposition $E(G)=\cup_{\ell=1}^{2p}M_\ell$ of $G$, a twisted positive mapping is a weight function  $w: V(H_q) \rightarrow \mathbb{R}$ satisfying: 
    \begin{equation*}
    \begin{split}
       w(i_{2q-1})+w(j_{2q})>0 \text{ and } w(j_{2q-1})+w(i_{2q})<0 & \text{ if } \{i_{2q-1},j_{2q}\} \in E(H_q), \\
       w(i_{2q-1})+w(j_{2q})<0 \text{ and } w(j_{2q-1})+w(i_{2q})<0 & \text{ if } \begin{cases}
           \{i,j\} \in E \setminus \cup_{k=0}^{2(q-1)}M_k, \text{ where $M_0=\emptyset$, and } \\
           \{i_{2q-1},j_{2q}\} \text{ and } \{j_{2q-1},i_{2q}\} \notin E(H_q), 
       \end{cases}
    \end{split}
    \end{equation*}  
    for all $i,j \in V$ $(i \neq j)$.
\end{definition}

\begin{eg} \label{eg.posmap}
    Let $G$ be a graph and $E=M_1 \cup \cdots \cup M_4$ be a twisted matching decomposition of $G$ as given in Example \ref{eg.hq}. In Figure \ref{fig.whq}, we define the twisted positive mappings $w_1$ and $w_2$, illustrated in red next to the vertices for the graphs $H_1$ and $H_2$, respectively. It is easy to verify that the weight functions $w_1$ and $w_2$ satisfies the conditions given in Definition \ref{def.tpmd}, where the sum of the weights of the vertices of each black edge is positive, and the sum of the weights of the vertices of each blue dotted edge is negative.

\begin{figure}[ht]
    \centering

\tikzset{every picture/.style={line width=0.75pt}} 

\begin{tikzpicture}[x=0.75pt,y=0.75pt,yscale=-1,xscale=1]
 
\draw    (180,300) -- (250,330) ;
\draw    (180,390) -- (250,300) ;
\draw[dashed,blue]    (180,330) -- (250,300) ;
\draw[dashed,blue]    (180,300) -- (250,390) ;
\draw[dashed,blue]    (180,330) -- (250,360) ;
\draw[dashed,blue]    (180,360) -- (250,330) ;
\draw[dashed,blue]    (180,300) -- (250,360) ;
\draw[dashed,blue]    (180,360) -- (250,300) ;
\draw[dashed,blue]    (180,330) -- (250,390) ;
\draw[dashed,blue]    (180,390) -- (250,330) ;

\draw    (380,330) -- (450,360) ;
\draw    (380,360) -- (450,300) ;
\draw    (380,390) -- (450,330) ;
\draw[dashed,blue]    (380,360) -- (450,330) ;
\draw[dashed,blue]    (380,300) -- (450,360) ;
\draw[dashed,blue]    (380,330) -- (450,390) ;

\draw (160,290) node [anchor=north west][inner sep=0.75pt]  [font=\small]  {$1_1$};
\draw (160,320) node [anchor=north west][inner sep=0.75pt]  [font=\small]  {$2_1$};
\draw (160,350) node [anchor=north west][inner sep=0.75pt]  [font=\small]  {$3_1$};
\draw (160,380) node [anchor=north west][inner sep=0.75pt]  [font=\small]  {$4_1$};
\draw (255,290) node [anchor=north west][inner sep=0.75pt]  [font=\small]  {$1_2$};
\draw (255,320) node [anchor=north west][inner sep=0.75pt]  [font=\small]  {$2_2$};
\draw (255,350) node [anchor=north west][inner sep=0.75pt]  [font=\small]  {$3_2$};
\draw (255,380) node [anchor=north west][inner sep=0.75pt]  [font=\small]  {$4_2$};
\draw (210,400) node [anchor=north west][inner sep=0.75pt]  [font=\small]  {$H_1$};

\draw (145,290) node [anchor=north west][inner sep=0.75pt]  [font=\small]  {\textcolor{red}{$2$}};
\draw (135,320) node [anchor=north west][inner sep=0.75pt]  [font=\small]  {\textcolor{red}{$-3$}};
\draw (135,350) node [anchor=north west][inner sep=0.75pt]  [font=\small]  {\textcolor{red}{$-3$}};
\draw (145,380) node [anchor=north west][inner sep=0.75pt]  [font=\small]  {\textcolor{red}{$2$}};
\draw (270,290) node [anchor=north west][inner sep=0.75pt]  [font=\small]  {\textcolor{red}{$-1$}};
\draw (270,320) node [anchor=north west][inner sep=0.75pt]  [font=\small]  {\textcolor{red}{$-1$}};
\draw (270,350) node [anchor=north west][inner sep=0.75pt]  [font=\small]  {\textcolor{red}{$-3$}};
\draw (270,380) node [anchor=north west][inner sep=0.75pt]  [font=\small]  {\textcolor{red}{$-3$}};

\draw (360,290) node [anchor=north west][inner sep=0.75pt]  [font=\small]  {$1_3$};
\draw (360,320) node [anchor=north west][inner sep=0.75pt]  [font=\small]  {$2_3$};
\draw (360,350) node [anchor=north west][inner sep=0.75pt]  [font=\small]  {$3_3$};
\draw (360,380) node [anchor=north west][inner sep=0.75pt]  [font=\small]  {$4_3$};
\draw (455,290) node [anchor=north west][inner sep=0.75pt]  [font=\small]  {$1_4$};
\draw (455,320) node [anchor=north west][inner sep=0.75pt]  [font=\small]  {$2_4$};
\draw (455,350) node [anchor=north west][inner sep=0.75pt]  [font=\small]  {$3_4$};
\draw (455,380) node [anchor=north west][inner sep=0.75pt]  [font=\small]  {$4_4$};
\draw (410,400) node [anchor=north west][inner sep=0.75pt]  [font=\small]  {$H_2$};

\draw (335,290) node [anchor=north west][inner sep=0.75pt]  [font=\small]  {\textcolor{red}{$-3$}};
\draw (345,320) node [anchor=north west][inner sep=0.75pt]  [font=\small]  {\textcolor{red}{$2$}};
\draw (335,350) node [anchor=north west][inner sep=0.75pt]  [font=\small]  {\textcolor{red}{$-2$}};
\draw (345,380) node [anchor=north west][inner sep=0.75pt]  [font=\small]  {\textcolor{red}{$2$}};
\draw (480,290) node [anchor=north west][inner sep=0.75pt]  [font=\small]  {\textcolor{red}{$3$}};
\draw (470,320) node [anchor=north west][inner sep=0.75pt]  [font=\small]  {\textcolor{red}{$-1$}};
\draw (470,350) node [anchor=north west][inner sep=0.75pt]  [font=\small]  {\textcolor{red}{$-1$}};
\draw (470,380) node [anchor=north west][inner sep=0.75pt]  [font=\small]  {\textcolor{red}{$-3$}};

\filldraw[black] (180,300) circle (1.5pt) ;
\filldraw[black] (180,330) circle (1.5pt); 
\filldraw[black] (180,360) circle (1.5pt); 
\filldraw[black] (180,390) circle (1.5pt); 
\filldraw[black] (250,300) circle (1.5pt) ;
\filldraw[black] (250,330) circle (1.5pt); 
\filldraw[black] (250,360) circle (1.5pt); 
\filldraw[black] (250,390) circle (1.5pt); 

\filldraw[black] (380,300) circle (1.5pt) ;
\filldraw[black] (380,330) circle (1.5pt); 
\filldraw[black] (380,360) circle (1.5pt); 
\filldraw[black] (380,390) circle (1.5pt); 
\filldraw[black] (450,300) circle (1.5pt) ;
\filldraw[black] (450,330) circle (1.5pt); 
\filldraw[black] (450,360) circle (1.5pt); 
\filldraw[black] (450,390) circle (1.5pt); 
\end{tikzpicture}
\caption{The weight functions for the graphs $H_1$ and $H_2$}
\label{fig.whq}
\end{figure}
\end{eg}

\begin{definition}
    We say a graph $G=(V(G),E(G))$ admits a twisted positive matching decomposition with respect to a twisted matching decomposition $E(G)=\cup_{i=1}^{2p}M_i$ if for $q=1, \ldots, p$, each graph $H_q$  has a twisted positive mapping. 
    The smallest $p$ for which $G$ admits a twisted positive matching decomposition with $p$ parts will be denoted by $\tpmd(G)$.
\end{definition}

\begin{eg}
    Let $m>1$ be an integer. Let $G=K_{1,m}$ be a star graph. Then $\tpmd(G)=\lceil m/2\rceil$.
    \begin{proof}
        Let $V(G)=\{0,1,\ldots,m\}$ be a vertex set and $E(G)=\cup_{i=1}^{m}M_i$, where $M_i=\{0,i\}$, be a twisted matching decomposition. If $m$ is odd then set $M_{m+1}=\{\emptyset\}$. Then, from Remark \ref{pos.match}, it follows that $H_q$ has twisted positive mapping for all $q=1,2,\ldots,\lceil m/2\rceil$, as desired.
    \end{proof}
\end{eg}

\begin{prop} \label{compare}
    Let $G$ be a graph. Then $\lceil \Delta(G)/2 \rceil \leq \tpmd(G)\leq \pmd(G)$.
\end{prop}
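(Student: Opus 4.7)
I handle the two inequalities separately.

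For the lower bound $\lceil\Delta(G)/2\rceil\le \tpmd(G)$: fix a twisted matching decomposition $E(G)=\bigcup_{\ell=1}^{2p}M_\ell$ with $p=\tpmd(G)$. Since each $M_\ell$ is (by convention, as in the ordinary matching decomposition defined in Section \ref{section 1}) a matching in $G$, every vertex is incident to at most one edge of each $M_\ell$, hence $\deg_G(v)\le 2p$ for every $v\in V(G)$; taking the maximum gives $\Delta(G)\le 2p$, equivalently $p\ge \lceil \Delta(G)/2\rceil$.

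For the upper bound $\tpmd(G)\le\pmd(G)$, let $E(G)=N_1\cup\cdots\cup N_p$ be a positive matching decomposition with $p=\pmd(G)$, and take $M_{2q-1}:=N_q$, $M_{2q}:=\emptyset$ for $q=1,\ldots,p$. Equation (\ref{eq.matchcond}) holds vacuously, so this is a valid twisted matching decomposition; the remaining task is to produce a twisted positive mapping $w:V(H_q)\to\mathbb{R}$ for each $q$.

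The idea is to repackage the conditions of Definition \ref{def.tpmd} as a single plain positive matching condition on an auxiliary bipartite graph, so that Remark \ref{pos.match} becomes applicable. Let $G_{q-1}:=(V(G),E(G)\setminus\bigcup_{k<q}N_k)$ and let $\Gamma_q$ be the bipartite graph on vertex classes $V_1=\{i_{2q-1}\mid i\in V(G)\}$ and $V_2=\{i_{2q}\mid i\in V(G)\}$ whose edge set consists of both $\{i_{2q-1},j_{2q}\}$ and $\{j_{2q-1},i_{2q}\}$ for every $\{i,j\}\in E(G_{q-1})$. Then $E(H_q)$ is a matching in $\Gamma_q$ (since $N_q$ is a matching in $G$), and a weight function on $V(\Gamma_q)$ that is positive on $E(H_q)$ and negative on $E(\Gamma_q)\setminus E(H_q)$ encodes exactly the edge and non-edge conditions of Definition \ref{def.tpmd} for $H_q$. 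By Remark \ref{pos.match}, the existence of such a weight function reduces to showing that $\Gamma_q[V(E(H_q))]$ admits no alternating closed walk with respect to $E(H_q)$.

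Such alternating closed walks can be ruled out by projection through $\pi:V(\Gamma_q)\to V(G)$ defined by $\pi(i_{2q-1})=\pi(i_{2q})=i$: any such walk pushes forward to a closed walk in $G_{q-1}[V(N_q)]$ that alternates between $N_q$-edges and $E(G_{q-1})\setminus N_q$-edges, contradicting Remark \ref{pos.match} applied to the positive matching $N_q$ of $G_{q-1}$. The principal obstacle is verifying this projection rigorously: one must check that every non-$E(H_q)$ edge of $\Gamma_q$ between two vertices of $V(E(H_q))$ really projects to an edge of $E(G_{q-1})\setminus N_q$, which rules out the \emph{a priori} possibility that such an edge might project to an $N_q$-edge with the ``wrong orientation''. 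This in turn uses that $N_q$ is a matching, so that each vertex $v\in V(N_q)$ is unambiguously either the smaller or the larger endpoint of its $N_q$-edge, and these two labels cannot coexist at the same vertex.
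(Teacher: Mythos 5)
Your proposal is correct, but for the upper bound $\tpmd(G)\le\pmd(G)$ it takes a genuinely different route from the paper. Both proofs start identically: take a positive matching decomposition $E(G)=N_1\cup\cdots\cup N_p$ with weight functions $w_q$, and form the twisted decomposition $(M_{2q-1},M_{2q})=(N_q,\emptyset)$. But the paper then \emph{constructs} the twisted positive mapping explicitly: setting $t_q=\max_i w_q(i)+1$, it defines $w'_q(i_{2q-1})=w_q(i)$ and $w'_q(j_{2q})=w_q(j)$ for each $\{i,j\}\in N_q$ with $i<j$, and assigns $-t_q$ to every remaining vertex of $H_q$; the conditions of Definition \ref{def.tpmd} then follow directly, the only nontrivial case being a remaining edge whose relevant pair of vertices both carry $w_q$-weights, which is handled by the inequality $w_q(i)+w_q(j)<0$ from the positive matching. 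You instead avoid constructing weights altogether: you encode Definition \ref{def.tpmd} as an ordinary positive-matching condition on the orientation-doubled bipartite graph $\Gamma_q$ and invoke the alternating-closed-walk criterion of Remark \ref{pos.match}, ruling out such walks by projecting to $G_{q-1}$ and citing the same criterion for $N_q$. The obstacle you flag is real but is resolved exactly as you indicate: since $N_q$ is a matching, for $\{i,j\}\in N_q$ with $i<j$ the vertices $j_{2q-1}$ and $i_{2q}$ are unmatched, so the reverse orientation of a matching edge never appears in $\Gamma_q[V(E(H_q))]$, and hence every non-matching edge of that induced subgraph projects into $E(G_{q-1})\setminus N_q$, making the projected walk genuinely alternating. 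The trade-off is clear: the paper's argument is shorter, self-contained, and yields the weight function explicitly, while yours trades the explicit construction for the combinatorial criterion at the cost of the induced-subgraph bookkeeping; it has the mild advantage of reusing Remark \ref{pos.match} as a black box rather than redoing weight estimates. Your lower-bound argument coincides with the paper's one-line justification, and you rightly flag that both rest on the convention, implicit in Definition \ref{def.tmd} but confirmed by the paper's star-graph example, that each part $M_\ell$ of a twisted matching decomposition is a matching.
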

\begin{proof}
    The inequality $\lceil \Delta(G)/2 \rceil \leq \tpmd(G)$ follows from matching decomposition of $G$. To prove the inequality $\tpmd(G)\leq \pmd(G)$, we assume that $\pmd(G)=p$ and $w_\ell:V(G) \rightarrow \mathbb{R}$ be a respective weight function on matching decomposition $E_\ell$ of $G$, for $\ell = 1,2,\ldots, p$. We need to show that $G$ admits a twisted positive matching decomposition with $p$ parts. Consider a twisted matching decomposition $(M_{2q-1},M_{2q})=(E_q,\{\emptyset \})$ for all $q = 1,2,\ldots, p$. Set $t_\ell=\max\{w_\ell(i)\mid i=1,2,\ldots, n\}+1$. We define a twisted mapping $w'_q:V(H_q) \rightarrow \mathbb{R}$ as follows:
    
    for $i<j \in V$ and $\{i,j\}\in (M_{2q-1},M_{2q})$,  
    $$w'_q(i_{2q-1})=w_q(i) \text{ and } w'_q(j_{2q})=w_q(j),$$
    $$w'_q(j_{2q-1})=-t_q \text{ and } w'_q(i_{2q})=-t_q,$$

    for all $i \in V\setminus V(E_q)$,
    $$w'_q(i_{2q-1})=-t_q \text{ and } w'_q(i_{2q})=-t_q.$$
    Since the weight functions $w_l$ for $l=1,2, \ldots, p$, correspond to the positive matching of $G$, it follows that $w'_q$ is a twisted positive mapping for all $q=1,2,\ldots, p$, as desired.
\end{proof}

\begin{remark}
It is important to note that in general, $\tpmd$ is not always equal to $\lceil \pmd/2 \rceil$. For instance, in case of a tree $G$, the corresponding $\pmd$ is given by $\Delta(G)$ but $\tpmd$ is not equal to $\lceil \Delta(G)/2 \rceil$ which is clear from the following example. 
Let $G=P_3$ be a path graph. Clearly, twisted matching decomposition of $G$ satisfying Equation (\ref{eq.matchcond}) have $3$ parts. This implies that $\tpmd(G)=2$ but $\lceil \Delta(G)/2 \rceil = 1$.  
\end{remark}


For the following notations related to Gr\"obner basis theory, please refer \cite{Bruns&Conca}.

\begin{definition}
Let $S=k[x_1, \ldots,x_n]$ be a polynomial ring. For a non-zero polynomial $$f= \sum_{\alpha \in \NN^{n}}a_{\alpha}x^{\alpha}$$ and a vector $\mathfrak{w}=(w_i: \, i \in [n]) \in \mathbb{R}^{n}$, set $m_{\mathfrak{w}}(f)=\max_{a_{\alpha} \neq 0} \{ \alpha \cdot \mathfrak{w} \}$. Then, $$ \text{in}_{\mathfrak{w}}(f)= \sum_{\alpha \cdot \mathfrak{w}=m_{\mathfrak{w}}(f)}a_{\alpha}x^{\alpha} $$ is called the initial form of $f$ with respect to $\mathfrak{w}.$
\end{definition}

Let $S$ be a polynomial ring and let $I$ be an ideal in $S$. Then for a term order $<$ on $S$ and $f \in S$, in$_<f$ denotes the largest term of $f$ and in$_<I$ denotes the ideal generated by in$_<f$ with $f \in I \backslash \{0\}$.

\begin{prop} \label{LT}
     Let $G=(V(G),E(G))$ be a graph, $d \geq p=\tpmd(G)$ and $E(G)= \cup_{\ell=1}^{2p}M_{\ell}$ be a twisted matching decomposition. If $G$ admits a twisted positive matching decomposition with respect to twisted matching decomposition $E(G)=\cup_{\ell=1}^{2p}M_\ell$, then there exists a term order $<$ on $S$ such that for all  $q=1,2,\ldots,p$, for every $A=\{ i,j \} \in (M_{2q-1},M_{2q})$,
    \begin{equation} \label{LT-eq}
     \text{in}_<(\hat{f}_A^{(d)})= \begin{cases}
         x_{i2q-1}x_{j2q} \hspace{5mm} \text{ if } \{i,j\} \in M_{2q-1}\\
         x_{i2q}x_{j2q-1} \hspace{5mm} \text{ if } \{ i,j \} \in M_{2q}.
     \end{cases}
    \end{equation}
\end{prop}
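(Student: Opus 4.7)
The plan is to build a single weight vector $\mathfrak{w}$ on the variables of $S$ by aggregating the twisted positive mappings $w_1,\ldots,w_p$ promised by the hypothesis, with earlier blocks given lexicographic priority, and then to refine $\mathfrak{w}$ to any compatible term order. For each $q\in\{1,\ldots,p\}$, I will lift $w_q\colon V(H_q)\to\mathbb{R}$ to a weight vector $\mathfrak{w}_q$ on $S$ by declaring $\mathfrak{w}_q(x_{i,2q-1})=w_q(i_{2q-1})$, $\mathfrak{w}_q(x_{i,2q})=w_q(i_{2q})$, and $\mathfrak{w}_q(x_{i,m})=0$ for $m\notin\{2q-1,2q\}$, and then form $\mathfrak{w}=\sum_{q=1}^{p}M^{p-q}\,\mathfrak{w}_q$ for a positive real number $M$ to be fixed at the end.

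Next I fix an edge $A=\{i,j\}$, with $i<j$, lying in $M_{2q_0-1}$ (the case $A\in M_{2q_0}$ will be entirely symmetric, swapping the roles of the two columns inside block $q_0$), and compute the $\mathfrak{w}$-weights of the $2d$ monomials appearing in $\hat{f}_A^{(d)}$, namely $x_{i,2\ell-1}x_{j,2\ell}$ and $x_{i,2\ell}x_{j,2\ell-1}$ for $\ell=1,\ldots,d$. When $\ell=q_0$, the first clause of Definition \ref{def.tpmd} yields $w_{q_0}(i_{2q_0-1})+w_{q_0}(j_{2q_0})>0$ and $w_{q_0}(j_{2q_0-1})+w_{q_0}(i_{2q_0})<0$, so the candidate initial term $x_{i,2q_0-1}x_{j,2q_0}$ has $\mathfrak{w}$-weight equal to $M^{p-q_0}$ times a positive constant, while its partner in block $q_0$ has $\mathfrak{w}$-weight equal to $M^{p-q_0}$ times a negative constant. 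When $\ell<q_0$ (necessarily $\ell\leq p$), the edge $A$ is still fresh, i.e., $A\in E(G)\setminus\bigcup_{k=0}^{2(\ell-1)}M_k$, and the pair corresponding to $A$ is not in $E(H_\ell)$, so the second clause of Definition \ref{def.tpmd} forces both monomials of block $\ell$ to have strictly negative $\mathfrak{w}$-weight. When $\ell>p$, the $\mathfrak{w}$-weight simply vanishes.

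The only case in which the twisted positive mapping gives no sign information at all is $q_0<\ell\leq p$, where $A$ has already been removed in an earlier block and Definition \ref{def.tpmd} says nothing about $\mathfrak{w}_\ell$ on the four variables of interest. This is where the main difficulty lies, and the exponential weighting is designed precisely to handle it: the block-$\ell$ contributions carry the prefactor $M^{p-\ell}$, strictly smaller than $M^{p-q_0}$, and since there are only finitely many edges $A$ and hence finitely many weights to compare, a single choice of $M$ sufficiently large forces the $M^{p-q_0}$ contribution on the candidate initial monomial to dominate every $M^{p-\ell}$ contribution with $\ell>q_0$. It then follows that $x_{i,2q_0-1}x_{j,2q_0}$ is the unique strict $\mathfrak{w}$-maximizer among the terms of $\hat{f}_A^{(d)}$, and any term order $<$ refining $\mathfrak{w}$ (for instance, $\mathfrak{w}$ followed by a lexicographic tie-breaker) then satisfies Equation (\ref{LT-eq}).
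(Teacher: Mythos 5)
Your proof is correct, but it takes a genuinely different route from the paper's. The paper never aggregates the lifted weight vectors $\mathfrak{w}_1,\ldots,\mathfrak{w}_p$ into a single weight: it uses them as a matrix (lexicographic) order, declaring $x^{\alpha}<x^{\beta}$ when $|\alpha|<|\beta|$, or when the degrees agree and $\mathfrak{w}_q(x^{\alpha})<\mathfrak{w}_q(x^{\beta})$ for the \emph{smallest} $q$ where they differ, with a fixed tie-breaking order $<_0$ at the end. Under that order the argument is a finite iteration: for an edge $A\in(M_{2q_0-1},M_{2q_0})$, each stage $\ell<q_0$ kills the block-$\ell$ monomials of $\hat{f}_A^{(d)}$ (negative $\mathfrak{w}_\ell$-weight versus weight $0$ elsewhere, by the second clause of Definition \ref{def.tpmd}), and at stage $q_0$ the matched monomial has strictly positive $\mathfrak{w}_{q_0}$-weight while all blocks $\ell>q_0$ sit at weight $0$ --- so the order simply never consults the later weight functions, and the case you correctly identify as the main difficulty ($q_0<\ell\leq p$, where the definition gives no sign information) disappears for free. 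Your substitute, $\mathfrak{w}=\sum_q M^{p-q}\mathfrak{w}_q$ with $M\gg 0$, is the standard simulation of such a matrix order by a single weight vector; it is legitimate here precisely because only finitely many monomials (two per block, over finitely many edges) ever get compared, so a uniform $M$ dominating the uncontrolled block-$\ell$ contributions exists, as you argue. What the paper's approach buys is the absence of any magnitude estimates; what yours buys is a single explicit weight vector, which some readers may find more concrete. One small repair: since the twisted positive mappings take negative values, ``$\mathfrak{w}$ followed by a lexicographic tie-breaker'' is not literally a term order (some variable would satisfy $x_i<1$); you must either compare total degree first, as the paper does --- harmless, since all monomials of $\hat{f}_A^{(d)}$ have degree $2$ --- or shift $\mathfrak{w}$ by a large positive multiple of the all-ones vector, which does not affect comparisons among monomials of equal degree.
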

\begin{proof}
Consider $E(G)= \cup_{\ell=1}^{2p}M_{\ell}$ be the matching decomposition of $G$ and $E(G) = \cup_{q=1}^p(M_{2q-1},M_{2q})$ be a twisted matching decomposition with the respective weight functions $w_{q}:V(H_q) \rightarrow \mathbb{R}$. In order to define the required term order $<$, we define the weight vectors $\mathfrak{w}_1, \ldots, \mathfrak{w}_p \in \mathbb{R}^{|V| \times 2d}$ as follows,

\begin{itemize}
    \item $\mathfrak{w}_{q}(x_{ik})=0$ if $k \notin \{2q-1,2q\}$ and
    \item $\mathfrak{w}_{q}(x_{ik})=w_q(i_k)$ if $k \in \{2q-1,2q\}$.
\end{itemize}

Then, by construction, it follows that:
\begin{equation} \label{intial1}
    \text{in}_{\mathfrak{w}_{1}}(\hat{f}_{\{i,j\}}^{(d)})= \begin{cases}
    x_{i1}x_{j2} \hspace{5mm} \text{ if } \{i,j\} \in M_{1}, \\
    x_{i2}x_{j1} \hspace{5mm} \text{ if } \{i,j\} \in M_{2}, \\
    \sum_{k=2}^d(x_{i2k-1}x_{j2k}-x_{i2k}x_{j2k-1}) \hspace{5mm} \text{ if } \{i,j\} \notin (M_{1},M_2).
\end{cases}
\end{equation}

With the weight vectors defined, we say $x^{\alpha} < x^{\beta}$ if,
\begin{enumerate}
    \item $|\alpha|<|\beta|$ or 
    \item $|\alpha|=|\beta|$ and $\mathfrak{w}_{q}(x^{\alpha})<\mathfrak{w}_{q}(x^{\beta})$ for the smallest $q$ such that $\mathfrak{w}_{q}(x^{\alpha})\neq \mathfrak{w}_{q}(x^{\beta}) $ or
    \item $|\alpha|=|\beta|$ and $\mathfrak{w}_{q}(x^{\alpha})=\mathfrak{w}_{q}(x^{\beta})$ for all $q$ and $x^{\alpha}<_0x^{\beta}$ for an arbitrary but fixed term order $<_0$.
\end{enumerate}

For a given edge $A=\{i,j\} \in E(G)$, one has $A \in (M_{2q-1},M_{2q})$ for some $1\leq q \leq p$. There are two possibilities, either $A \in (M_{1},M_{2})$ or $A \notin (M_{1},M_{2})$.

If $A \in (M_{1},M_{2})$, then the statement follows from Equation (\ref{intial1}).

If $A \notin (M_{1},M_{2})$, then, from the defined weight vector $\mathfrak{w}_{1}$, it follows that $\mathfrak{w}_{1}(x_{i1}x_{j2})<\mathfrak{w}_{1}(y)$ and $\mathfrak{w}_{1}(x_{i2}x_{j1})<\mathfrak{w}_{1}(y)$ for all $y \in \{x_{i,2k-1}x_{j,2k},x_{i,2k}x_{j,2k-1}\}$, where $k=2,\ldots, d$. This implies that $x_{i1}x_{j2}$ and $x_{i2}x_{j1}$ are not the leading term of $\hat{f}_A^{(d)}$ with respect to $<$, and so the leading term is a monomial in $\sum_{k=2}^d(x_{i2k-1}x_{j2k}-x_{i2k}x_{j2k-1})$. Now, in the next step, there are two possibilities, either $A \in (M_{3},M_{4})$ or $A \notin (M_{3},M_{4})$. 

If $A \in (M_{3},M_{4})$, then, from the weight vector $\mathfrak{w}_{2}$, it follows that $x_{i3}x_{j4} (x_{i4}x_{j3})$ is the leading term of $\hat{f}_A^{(d)}$ if $A \in M_3 (M_4)$, respectively. 

If $A \notin (M_{3},M_{4})$, then, from the weight vector $\mathfrak{w}_{2}$, it follows that $\mathfrak{w}_{2}(x_{i3}x_{j4})<\mathfrak{w}_{2}(y)$ and $\mathfrak{w}_{2}(x_{i4}x_{j3})<\mathfrak{w}_{2}(y)$ for all $y \in \{x_{i,2k-1}x_{j,2k},x_{i,2k}x_{j,2k-1}\}$, where $k=3,\ldots, d$. This implies that with respect to the monomial order $<$, $x_{i3}x_{j4}$ and $x_{i4}x_{j3}$ are not the leading term of $\hat{f}_A^{(d)}$, and so the leading term is a monomial in $\sum_{k=3}^d(x_{i2k-1}x_{j2k}-x_{i2k}x_{j2k-1})$. 
In this way, by repeating the above process one obtains the desired result. 

Observe that this process terminates in a finite number of steps as the leading term of $\hat{f}_A^{(d)}$ will be obtained at the $q$th step, where $q \leq p$ is the integer such that $A \in (M_{2q-1},M_{2q})$.
\end{proof}

\begin{eg}
 Let $G$ be a graph and $E(G)=M_1 \cup \cdots \cup M_4$ be a twisted matching decomposition of $G$ as given in Example \ref{eg.hq}. Let $d \geq 2$ be an integer and $S=K[x_{ij} \mid i \in [4], j \in [2d]]$ be a polynomial ring. The twisted LSS-ideal of $G$ generated by the following elements:
    
    $\hat{f}_{12}^{(d)}= \textcolor{red}{x_{11}x_{22}}-x_{12}x_{21}+x_{13}x_{24}-x_{14}x_{23}+\cdots + x_{1,2d-1}x_{2,2d}-x_{1,2d}x_{2,2d-1}$,
    
    $\hat{f}_{13}^{(d)}= x_{11}x_{32}-x_{12}x_{31}+x_{13}x_{34}-\textcolor{red}{x_{14}x_{33}}+\cdots + x_{1,2d-1}x_{3,2d}-x_{1,2d}x_{3,2d-1}$,

    $\hat{f}_{14}^{(d)}= x_{11}x_{42}-\textcolor{red}{x_{12}x_{41}}+x_{13}x_{44}-x_{14}x_{43}+\cdots + x_{1,2d-1}x_{4,2d}-x_{1,2d}x_{4,2d-1}$,
    
    $\hat{f}_{23}^{(d)}= x_{21}x_{32}-x_{22}x_{31}+\textcolor{red}{x_{23}x_{34}}-x_{24}x_{33}+\cdots + x_{2,2d-1}x_{3,2d}-x_{2,2d}x_{3,2d-1}$,
    
    $\hat{f}_{24}^{(d)}= {x_{21}x_{42}}-x_{22}x_{41}+x_{23}x_{44}-\textcolor{red}{x_{24}x_{43}}+\cdots + x_{2,2d-1}x_{4,2d}-x_{2,2d}x_{4,2d-1}$.
    
    \vspace{2mm}

\noindent In the above, we highlighted leading term of $\hat{f}_{e}^{(d)}$ in red, for all $e\in E(G)$, with respect to term order $<$ as defined in Proposition \ref{LT} on $S$. Observe that leading term of $\hat{f}_{e}^{(d)}$ for all $e \in E(G)$ is pairwise coprime for $d \geq 2$. Hence, the ideal $L_G(d)$ is a complete intersection for $d \geq 2$. Furthermore, this twisted matching decomposition is minimal since $\Delta(G)=3$, this implies that $\tpmd(G) = 2$.
\end{eg}

\begin{remark}
The weight vectors play a crucial role in forcing the leading terms of the generators of the ideal to be coprime to each other, which eventually helps in concluding the complete intersection property of the ideal. 
Note that the weight vectors defined by the authors in \cite[Lemma $5.5$]{CW2019} cannot be used directly to get relatively prime leading terms 
in the case of twisted LSS-ideals because of the change in the form of the generators. In fact, we observe that a slight change in defining the weight functions and the weight vectors seems to work in the case of twisted LSS-ideals but the bound obtained is far from being optimal.
Hence, in order to obtain the desired leading terms with a better bound on $d$, in Proposition \ref{LT} we consider the weight vectors corresponding to the twisted positive matching decomposition. 
\end{remark}

As a consequence of Proposition \ref{LT}, we obtain the radical complete intersection property of the twisted LSS-ideals. 

\begin{thm} \label{CI_pmd}
    Let $G$ be a graph. Then $\hat{L}_G(d)$ is a radical complete intersection when $d \geq \tpmd (G)$.
\end{thm}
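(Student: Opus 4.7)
The plan is to combine Proposition \ref{LT} with Remark \ref{rem.reg.seq}. Given $d \geq \tpmd(G) = p$, Proposition \ref{LT} produces a term order $<$ on $S$ under which the leading term of each generator $\hat{f}_A^{(d)}$ is a squarefree degree-two monomial of the explicit form in \eqref{LT-eq}. My first step would be to verify that these leading terms are pairwise coprime across all edges $A \in E(G)$; once this is in hand, the usual Gröbner-basis mechanism will finish the argument.

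For the coprimality check, I would fix two edges $A_1, A_2 \in E(G)$ and split on which parts of the twisted matching decomposition contain them. If $A_1 \in (M_{2q-1}, M_{2q})$ and $A_2 \in (M_{2q'-1}, M_{2q'})$ with $q \neq q'$, the leading terms involve only variables $x_{\,\cdot\, k}$ with $k \in \{2q-1, 2q\}$ and $k \in \{2q'-1, 2q'\}$ respectively, and these sets are disjoint, giving coprimality for free. If both lie in the same $M_{2q-1}$ or both in $M_{2q}$, coprimality is immediate from the fact that a matching consists of pairwise disjoint edges. The delicate case is $A_1 = \{i_1, j_1\} \in M_{2q-1}$ with $i_1 < j_1$ and $A_2 = \{i_2, j_2\} \in M_{2q}$ with $i_2 < j_2$; here the leading terms are $x_{i_1, 2q-1}x_{j_1, 2q}$ and $x_{j_2, 2q-1}x_{i_2, 2q}$, and coprimality reduces to ruling out the coincidences $j_1 = i_2$ and $i_1 = j_2$. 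Each such coincidence would place an edge of the form $\{j_1, k_2\}$ with $k_2 > j_1$ or $\{k_1, i_1\}$ with $k_1 < i_1$ into $M_{2q}$, which is precisely what \eqref{eq.matchcond} forbids. I expect this case analysis, tracing the combinatorial hypothesis through to the algebraic conclusion, to be the main (though still modest) technical hurdle.

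With pairwise coprimality established, Buchberger's criterion implies that every $S$-polynomial among the generators $\hat{f}_A^{(d)}$ reduces to zero, so these generators form a Gröbner basis of $\hat{L}_G(d)$ with respect to $<$. Consequently $\operatorname{in}_<(\hat{L}_G(d))$ is generated by pairwise coprime squarefree quadratic monomials. Such an ideal is radical (it is a squarefree monomial ideal) and a complete intersection (pairwise coprime monomials form a regular sequence, and their number equals $|E(G)| = \operatorname{ht}(\operatorname{in}_<(\hat{L}_G(d)))$). Applying Remark \ref{rem.reg.seq} transports both properties from the initial ideal back to $\hat{L}_G(d)$, yielding the claim.
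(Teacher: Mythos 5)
Your proposal is correct and follows essentially the same route as the paper: invoke Proposition \ref{LT} to get leading terms of the form \eqref{LT-eq}, observe they are squarefree and pairwise coprime, and transfer radicality and the complete intersection property from the initial ideal via Remark \ref{rem.reg.seq}. The only difference is one of detail, not substance --- your case analysis for coprimality (disjoint index blocks for $q \neq q'$, matching disjointness within a part, and condition \eqref{eq.matchcond} ruling out the cross-coincidences $j_1 = i_2$ and $i_1 = j_2$) and the explicit appeal to Buchberger's criterion are steps the paper's proof asserts implicitly.
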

\begin{proof}
    Let $d \geq p= \tpmd(G)$ and $E=\cup_{q=1}^p(M_{2q-1},M_{2q})$ be a twisted matching decomposition of $G$. Since Proposition $\ref{LT}$ guarantees a term order $<$ satisfying Equation $\ref{LT-eq}$ and $E$ is a twisted matching decomposition of $G$, one obtains that the initial monomials of $\hat{f}_{A}^{(d)}$ of $\hat{L}_G(d)$ are pairwise coprime and squarefree. Then, the result follows from Remark \ref{rem.reg.seq}. 
\end{proof}

The following result gives the necessary conditions 
for LSS-ideals and twisted LSS-ideals of graphs, in general, to be complete intersections.

\begin{lemma} \label{notCI}
    Let $d$ be an integer and $G$ be a graph. Then,
    \begin{enumerate}
        \item \label{lem.CI1} if $d< \Delta(G)$ then $L_{G}(d)$ is not a complete intersection.
        \item \label{lem.CI2} if $2d< \Delta(G)$ then $\hat{L}_{G}(d)$ is not a complete intersection.
    \end{enumerate}
\end{lemma}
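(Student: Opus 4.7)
The plan is to reduce both parts to the same local obstruction: at a vertex $v$ of maximal degree $\Delta = \Delta(G)$, the generators of the LSS (or twisted LSS) ideal corresponding to edges incident to $v$ already live inside a short list of variables, and this forces an upper bound on the height of the subideal they generate.

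First I would recall the standard fact that in a polynomial ring $S$ over $k$, a homogeneous ideal $I$ is a complete intersection if and only if a (equivalently, every) minimal homogeneous generating set of $I$ is a regular sequence; moreover, any subset of a homogeneous regular sequence remains a regular sequence, and so the ideal it generates is itself a complete intersection of height equal to the cardinality of the subset. This allows me to pass freely from the complete intersection hypothesis on $L_G(d)$ (or $\hat L_G(d)$) to statements about any subset of the natural generating set indexed by edges.

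For part \eqref{lem.CI1}, fix a vertex $v$ with $\deg_G(v) = \Delta$ and let $e_1,\dots,e_\Delta$ be the edges incident to $v$, with $e_k = \{v,w_k\}$. By construction every generator $f_{e_k}^{(d)} = \sum_{\ell=1}^{d} x_{v\ell}\, x_{w_k\ell}$ lies in the prime ideal $\mathfrak{p} = (x_{v1},\dots,x_{vd})$, which has height $d$. Hence the subideal $J = (f_{e_1}^{(d)},\dots,f_{e_\Delta}^{(d)}) \subseteq \mathfrak{p}$ satisfies $\het(J) \le d$. If $L_G(d)$ were a complete intersection, then by the fact above $J$ would also be a complete intersection, forcing $\het(J) = \Delta$; combined with $d < \Delta$ this gives a contradiction.

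For part \eqref{lem.CI2}, I would run the identical argument with the twisted generators. Now every $\hat f_{e_k}^{(d)} = \sum_{\ell=1}^{d}\bigl(x_{v,2\ell-1} x_{w_k,2\ell} - x_{v,2\ell} x_{w_k,2\ell-1}\bigr)$ lies in $\mathfrak{p}' = (x_{v1}, x_{v2}, \ldots, x_{v,2d})$, which has height $2d$. The same height-counting argument yields $\Delta \le 2d$, contradicting $2d < \Delta$. The only non-cosmetic step is the passage from complete intersection to complete intersection of a subgenerating set, which is where I would be most careful, but this is a classical consequence of regular sequences being preserved under passage to subsequences in a graded polynomial ring, so no serious obstacle is anticipated.
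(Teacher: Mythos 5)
Your proof is correct, but it takes a genuinely different route from the paper's. The paper argues via a global prime: for a vertex $u$ with $\deg_G(u)\geq d+1$ it forms $P_T=(x_{u1},\ldots,x_{ud})+Q_{G\setminus u}$, where $Q_{G\setminus u}$ is a minimal prime of $L_{G\setminus u}(d)$, notes $P_T$ is prime (disjoint variables) and contains $L_G(d)$, and bounds $\het(Q_{G\setminus u})\leq \mu(L_{G\setminus u}(d))$ by the generalized Krull height theorem, yielding the quantitative estimate $\het(L_G(d))\leq \mu(L_G(d))-1$. You instead localize the obstruction at the star of a maximal-degree vertex $v$: the $\Delta(G)$ generators incident to $v$ all lie in the prime $(x_{v1},\ldots,x_{vd})$ of height $d$ (resp.\ $(x_{v1},\ldots,x_{v,2d})$ of height $2d$ in the twisted case), and since subsets of homogeneous regular sequences in a graded polynomial ring are again regular sequences (graded regular sequences are permutable), the subideal they generate would have height $\Delta(G)\leq d$, a contradiction. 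Both arguments are sound; you should just make explicit the small point that the $f_e^{(d)}$ (resp.\ $\hat f_e^{(d)}$) do form a \emph{minimal} generating set, which holds because they are linearly independent quadrics with pairwise distinct supporting monomials. Your argument is more elementary and self-contained, avoiding minimal primes of the vertex-deleted graph entirely, and it transfers to the twisted case by literally one substitution. What the paper's construction buys is reusability: the same prime $P_T$ with larger exceptional sets $T$ drives Lemma \ref{lemma:notCIACI} and the height computation $\het(P_T)\leq n-3$ in Theorem \ref{ACI:tree} for the almost complete intersection characterization, where one needs an explicit height deficit (at least $2$ below $\mu$) rather than merely the non-complete-intersection conclusion; your subideal trick delivers the contradiction for complete intersections but does not by itself produce those sharper estimates.
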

\begin{proof}
    $(\ref{lem.CI1})$. It suffices to show that there exists a prime $P$ such that $L_G(d)$ is contained in $P$ with $\het(P)\leq \mu(L_G(d))-1$, where $\mu(L_G(d))$ denotes the cardinality of a minimal generating set of $L_G(d)$. To prove this, let $u$ be a vertex of $G$ such that $\deg_G(u) \geq d+1$ and set $T=\{u\}$. Assume $P_T=(x_{u1},\ldots, x_{ud})+ Q_{G\setminus u}$, where $Q_{G\setminus u}$ is a minimal prime of $L_{G\setminus u}(d)$. Since the prime ideals $Q_{G\setminus u}(d)$ and $(x_{u1},\ldots, x_{ud})$ are in distinct set of variables, $P_T$ is a prime ideal and clearly contains $L_G(d)$. Now, from \cite[Theorem 13.5]{M1989}, one has, $\het(Q_{G\setminus u}) \leq \mu(L_{G \setminus u}(d))$. Then,  
    \begin{equation*}
    \begin{split}
         \het(P_T) & = \het(x_{u1},\ldots, x_{ud}) + \het(Q_{G\setminus u}), \\
        & \leq d + \mu(L_{G \setminus u}(d)), \\
        & = d+ \mu(L_{G}(d)) - (d+1), \\
        & = \mu(L_{G}(d))-1.
    \end{split}
    \end{equation*}
    Thus $L_G(d)$ is not a complete intersection.

The assertion $(\ref{lem.CI2})$ follows in a similar way.
\end{proof}

In Theorem \ref{Thm:UCI}, we give sufficient conditions for the radical complete intersection property of the LSS-ideals corresponding to unicyclic and bicyclic graphs in terms of the maximum degree of vertices of the graphs. 

\begin{thm} \label{Thm:UCI}
    Let $G$ be a graph and $d$ a positive integer. 
\begin{enumerate}
    \item If $G$ is a unicyclic graph with $d \geq 3$, then $L_{G}(d)$ is a radical complete intersection for $d \geq \Delta(G)$. 
    \item If $G$ is a bicyclic graph with $d \geq 4$, then  $L_{G}(d)$ is a radical complete intersection for $d \geq \Delta(G)$. 
\end{enumerate}
  \end{thm}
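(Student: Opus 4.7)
The plan is to reduce everything to Remark~\ref{pmdCI}, which guarantees that $L_G(d)$ is a radical complete intersection whenever $d\geq\pmd(G)$. Thus for part~(1) it suffices to establish
\[
\pmd(G)\leq \max\{3,\Delta(G)\}
\]
for every unicyclic graph $G$, and for part~(2) to establish
\[
\pmd(G)\leq \max\{4,\Delta(G)\}
\]
for every bicyclic graph $G$. Combined with the hypothesis $d\geq\Delta(G)$ and the lower hypothesis $d\geq 3$ (resp.\ $d\geq 4$), this immediately forces $d\geq\pmd(G)$.

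The first step would be the \emph{cycle base case}: I would show $\pmd(C_n)=3$ for every $n\geq 3$. The lower bound $\pmd(C_n)\geq 3$ follows from Remark~\ref{pos.match}, because any decomposition of $E(C_n)$ into two matchings produces an alternating closed walk around the cycle. For the upper bound I would exhibit an explicit partition $E(C_n)=M_1\cup M_2\cup M_3$ together with, at each stage $i$, a weight function $w_i:V(C_n)\to\mathbb{R}$ whose signs satisfy the positivity conditions on the remaining graph; a short case analysis on the parity of $n$ suffices. An analogous explicit argument would show $\pmd(H)\leq 4$ when $H$ is one of the three topological types of bicyclic skeleton (two disjoint cycles joined by a path, two cycles sharing a vertex, two cycles sharing an edge/path), handling each cycle by its 3-matching decomposition and using a fourth matching to absorb the conflicts that appear at the shared part.

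The second step is the \emph{tree attachment}. Writing $G=H\cup T_1\cup\cdots\cup T_s$ with $H$ the cycle (resp.\ bicyclic skeleton) and each $T_j$ a tree rooted at a vertex of $H$, I would process the $T_j$ in a BFS order from their roots. Since each $T_j$ is a forest, Remark~\ref{rem.pmdforest} gives $\pmd(T_j)=\Delta(T_j)\leq\Delta(G)$, so its edges can be distributed into $\Delta(G)$ matchings. I would then amalgamate these with the $3$ (resp.\ $4$) matchings already constructed on $H$ so that the total count stays $\max\{3,\Delta(G)\}$ (resp.\ $\max\{4,\Delta(G)\}$). When $\Delta(G)=2$ the graph $G$ is itself a cycle and there is nothing to attach; when $\Delta(G)\geq 3$ (resp.\ $\geq 4$), the $3$ (resp.\ $4$) cycle-matchings have enough room to also carry the tree edges incident to the roots, because a root of $T_j$ has cycle-degree $2$ and tree-degree at most $\Delta(G)-2$. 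After assigning tree edges to matching indices greedily by BFS level, one checks by induction on distance from $H$ that each matching remains a matching.

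The third step is the \emph{positivity check} for the combined decomposition. For each matching $M_i$ I would construct a global weight function $w_i:V(G)\to\mathbb{R}$ by first keeping the weights on $V(H)$ from Step~1, then extending outward along the trees by BFS. At each new tree vertex $v$, the sign of $w_i(v)$ is forced by the positivity requirement on the unique edge of $T_j$ that joins $v$ to its parent; the magnitude can be chosen large enough to make every yet-unprocessed edge incident to $v$ negative, which is possible because the tree structure means the constraints are triangular. The technical content is that the headroom from leaves inward is large enough to simultaneously satisfy all inequalities on the (finitely many) edges incident to $v$.

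The main obstacle is the compatibility in Step~3: positivity is a global condition on a single weight function, and the weights on cycle vertices are already pinned down by the cycle decomposition. The delicate point is ensuring, at each root of a tree $T_j$, that the pre-assigned cycle weight at that vertex does not conflict with the positivity constraints imposed by the tree edges incident to it. Controlling this requires that the hypothesis $d\geq\Delta(G)$ really is enough to separate the cycle and tree constraints, which is where the cutoffs $d\geq 3$ and $d\geq 4$ become visible in the argument.
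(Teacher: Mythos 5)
Your reduction to the bounds $\pmd(G)\le\max\{3,\Delta(G)\}$ (unicyclic) and $\pmd(G)\le\max\{4,\Delta(G)\}$ (bicyclic) via Remark~\ref{pmdCI} is exactly the paper's strategy, and these are the correct target bounds. But where the paper proves them in a few lines, you rebuild everything bottom-up. The paper takes $M_1$ to be a \emph{single} edge of the cycle together with greedily chosen non-cycle edges covering the maximum-degree vertices, certifies that $M_1$ is positive by the alternating-closed-walk criterion of Remark~\ref{pos.match} (no weight function is ever written down), and then observes that $G\setminus M_1$ is a forest (resp.\ unicyclic), so Remark~\ref{rem.pmdforest} (resp.\ part (1) applied inductively) gives $\pmd(G\setminus M_1)\le\max\{2,\Delta(G)-1\}$ (resp.\ $\le\max\{3,\Delta(G)-1\}$), whence the bound. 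By peeling the cycle edge \emph{first}, every later stage lives in a forest, and the amalgamation and global-positivity bookkeeping occupying your Steps 2 and 3 simply never arises.

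The genuine gap is that your Step 3 --- which you yourself call the main obstacle --- is not carried out, and your diagnosis of it is off. There is no global coupling to control: each $M_i$ has its own weight function $w_i$, and by the sequential definition of a pm-decomposition negativity is only required on edges still present at stage $i$; on the tree parts the constraints are triangular (each tree edge has a free endpoint at the moment it is processed in BFS order), so the extension always succeeds, independently of $d$. Hence your claim that the cutoffs $d\ge 3$ and $d\ge 4$ ``become visible'' in the weight-headroom analysis is wrong: those thresholds come from the skeleton alone --- for instance, a perfect matching of an even cycle is itself an alternating closed walk, hence not positive by Remark~\ref{pos.match}, so the cycle genuinely needs three parts and your parity case analysis must avoid the perfect-matching decomposition. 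What your write-up actually leaves unverified is the stage at which the cycle is still present: after amalgamating tree edges into the cycle matchings you must check that each $M_i$ containing a cycle edge is still a positive matching of the remaining graph; this does hold (with at most one cycle edge per matching, any alternating closed walk is impossible, since an alternating walk cannot backtrack and so must alternate around the cycle), but the verification has to be made, and Remark~\ref{pos.match} is the tool for it. Two minor points: your bicyclic type ``two cycles sharing an edge/path'' is vacuous under the paper's definition (such a graph contains three cycles, hence is not bicyclic), and for $\Delta(G)=2$ a unicyclic graph need not equal its cycle (it may have path components), though that case is harmless.
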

\begin{proof}

\noindent (1). By Remark \ref{pmdCI}, it is enough to show $\pmd(G) \leq d$. We consider a matching $M_1$ consisting of an edge $e_1$ of the cycle and edges $e_2, \ldots, e_m$ such that for $i=2, \ldots, m$, $e_i$ is not an edge of the cycle and have a vertex of degree $\Delta(G)$ in $G \setminus \{e_1, \ldots, e_{i-1}\}$. Since $M_1$ has only one edge from the cycle, from Remark \ref{pos.match}, $M_1$ is a positive matching on $G$. From the construction of $M_1$, we get that  $G\setminus M_1$ is a forest with $\Delta(G\setminus M_1) = \max\{2, \Delta(G)-1\}$. Since $G\setminus M_1$ is a forest it follows from Remark \ref{rem.pmdforest}, it follows that $\pmd(G\setminus M_1) = \max\{2, \Delta(G)-1\}$. This implies that $\pmd(G) \leq \max\{3,\Delta(G)\}$, in particular, $\pmd(G) \leq d$. 

\vspace{2mm}

\noindent (2). The proof is similar to part $(1)$ of this theorem. We consider a positive matching $M_1$ consisting of an edge $e_1$ of a cycle and edges $e_2, \ldots, e_m$ such that for $i=2, \ldots, m$, $e_i$ is not an edge of any cycle and have a vertex of degree $\Delta(G)$ in $G \setminus \{e_1, \ldots, e_{i-1}\}$. Then, in this case, from the construction of the matching $M_1$, $G\setminus M_1$ is unicyclic with $\Delta(G\setminus M_1) = \max\{3,\Delta(G)-1\}$. Moreover, from $(1)$,  $\pmd(G\setminus M_1) \leq \max\{3,\Delta(G)-1\}$. Hence we get, $\pmd(G) \leq \max\{4,\Delta(G)\}$ and so the theorem.
\end{proof}

\begin{remark}
    Observe that as a result of Theorem \ref{Thm:UCI} and \cite[Theorem $1.1$]{CW2019}, one obtains conditions which guarantee the primality of the LSS-ideals corresponding to unicyclic (bicyclic) graphs.   
\end{remark}

As a consequence of the above theorems and \cite[Proposition 7.4, 7.5, 7.7]{CW2019}, sufficient conditions are obtained for the ideal of $(d+1)$-minors of generic/generic symmetric matrices associated with 
graphs to be radical and of maximal height and the Pfaffian ideal generated by Pfaffians of order $2d+2$ of generic skew-symmetric matrices associated with graphs to be radical.

\begin{cor}
    Let $G$ be a unicyclic (bicyclic) graph with $d \geq \Delta(G)$ and $d \geq 3$. Then: 
    \begin{enumerate}
        \item 
        $I_{d+1}(X_G^{sym})$ is radical and attains maximal height.
        \item If $G$ has a unique cycle of even length, then $I_{d+1}(X_G^{gen})$ is radical and attains maximal height. 
    \end{enumerate}
\end{cor}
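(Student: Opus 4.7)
The plan is to combine Theorem \ref{Thm:UCI}, which under the stated hypotheses guarantees that $L_G(d)$ is a radical complete intersection, with the algebraic transfer machinery of \cite[Propositions 7.4, 7.5, 7.7]{CW2019}. Those propositions, via the isomorphisms recalled in Remark \ref{Isomorphisms}, translate radicality and complete intersection statements for LSS-ideals into analogous statements (radicality and maximal height) for the associated generic and generic symmetric determinantal ideals, respectively. So the proof is essentially a matter of verifying that the hypotheses of those transfer results are met.

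First I would invoke Theorem \ref{Thm:UCI}: since $d \geq 3$ and $d \geq \Delta(G)$, the ideal $L_G(d)$ is a radical complete intersection whenever $G$ is unicyclic, and if in addition $d \geq 4$ when $G$ is bicyclic (the case $\Delta(G) \leq 3$ for bicyclic is absorbed by taking $d\geq 4$; for $\Delta(G)\geq 4$ the bound $d\geq\Delta(G)$ already forces $d\geq 4$). In particular, $\het L_G(d) = |E(G)|$ and $L_G(d)$ is radical in $S$.

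For part (1), I would apply the isomorphism of Remark \ref{Isomorphisms}(\ref{iso2}), which presents $K[YY^T]/L_G(d)\cap K[YY^T]$ as the quotient of $K[x_{ij}]/I_{d+1}(X_G^{sym})$ by the non-edge variables. Feeding the radical complete intersection property of $L_G(d)$ into \cite[Proposition 7.5]{CW2019} (the symmetric transfer statement) yields that $I_{d+1}(X_G^{sym})$ is radical and attains maximal height, as desired. For part (2), the generic isomorphism of Remark \ref{Isomorphisms}(\ref{iso1}) realizes $G$ as a subgraph of a complete bipartite graph, which forces $G$ to be bipartite; for a unicyclic graph this is equivalent to the unique cycle being even, which is precisely the extra hypothesis. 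With bipartiteness, \cite[Proposition 7.4]{CW2019} then transfers the radical complete intersection of $L_G(d)$ into the stated conclusion for $I_{d+1}(X_G^{gen})$.

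The proof contains no new algebraic content beyond what Theorem \ref{Thm:UCI} already provides; the only point requiring care is bookkeeping the hypotheses of the Conca--Welker transfer propositions, particularly that the maximal height conclusion depends on having the complete intersection property (not just radicality) of $L_G(d)$, which is exactly what Theorem \ref{Thm:UCI} supplies.
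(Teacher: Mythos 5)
Your proposal coincides with the paper's own (implicit) proof: the corollary is presented there as an immediate consequence of Theorem \ref{Thm:UCI} together with the transfer results \cite[Propositions 7.4, 7.5, 7.7]{CW2019}, applied through the isomorphisms of Remark \ref{Isomorphisms}, which is precisely your argument. Your additional bookkeeping --- noting that the bicyclic case really requires $d \geq 4$ from Theorem \ref{Thm:UCI}(2), and that bipartiteness (equivalently, the unique cycle being even in the unicyclic case) is what licenses the generic transfer --- is correct and merely makes explicit what the paper leaves tacit.
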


\begin{cor}
Let $G$ be a unicyclic (bicyclic) graph with $2d\geq \Delta(G)$ and $d \geq 2$. Then Pf$_{2d+2}(X_G^{skew})$ is radical and attains maximal height.    
\end{cor}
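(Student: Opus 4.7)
The plan is to combine Theorem \ref{CI_pmd} with the isomorphism of Remark \ref{Isomorphisms}\eqref{iso3} and the transfer principle \cite[Proposition 7.7]{CW2019}. The first two ingredients reduce the claim about $\text{Pf}_{2d+2}(X_G^{skew})$ to the statement that $\hat{L}_G(d)$ is a radical complete intersection, and Theorem \ref{CI_pmd} in turn reduces this to the inequality $\tpmd(G)\leq d$. Since the hypotheses $2d\geq \Delta(G)$ and $d\geq 2$ give $d\geq \max\{\lceil\Delta(G)/2\rceil,2\}$, it is enough to establish
$$\tpmd(G)\leq \max\bigl\{\lceil\Delta(G)/2\rceil,2\bigr\}$$
for unicyclic $G$, with an analogous bound (where $2$ is replaced by at most $2$, since one twisted pair can hold two matching levels) for bicyclic $G$.

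To prove this $\tpmd$-bound I would mirror the construction used in the proof of Theorem \ref{Thm:UCI}. For the unicyclic case, I first construct a twisted matching pair $(M_1,M_2)$ that simultaneously removes one edge from the cycle and absorbs, between the two matchings, a pair of maximum-degree vertices. Using Remark \ref{pos.match} applied to the auxiliary graph $H_1$ of Definition \ref{def.tmd}, I verify that $(M_1,M_2)$ admits a twisted positive mapping: since $M_1$ contains only a single cycle edge, no alternating closed walk can arise in the bipartite doubling. After this first pair, $G\setminus(M_1\cup M_2)$ is a forest with maximum degree dropped by at least one, and I would iterate: at each subsequent pair I pack two matchings of the remaining forest, invoking an adapted version of Remark \ref{rem.pmdforest} to ensure a twisted positive mapping exists, until the edge set is exhausted in $\lceil\Delta(G)/2\rceil$ (or, for small $\Delta$, exactly $2$) pairs. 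The bicyclic case is handled by arranging $(M_1,M_2)$ to break an edge of each of the two cycles in a single level when the two cycles are suitably separated, and otherwise spending one additional pair on the second cycle.

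The main obstacle is engineering the pair $(M_1,M_2)$ so as to respect the position-sensitive compatibility condition in Equation \eqref{eq.matchcond}: the left endpoint of an edge in $M_{2q-1}$ must not appear as the right endpoint of any edge in $M_{2q}$, and symmetrically. Unlike the plain positive-matching decomposition, this constraint is order-sensitive and forces a careful relabelling of the vertices along the cycle so that the chosen cycle edge and its neighbours sit in a ``safe'' linear order with respect to the other matching edges that will be placed at the same level. Once this combinatorial arrangement is made, the twisted positive mapping on each $H_q$ follows routinely from Remark \ref{pos.match}, and Theorem \ref{CI_pmd} together with the isomorphism from Remark \ref{Isomorphisms}\eqref{iso3} and \cite[Proposition 7.7]{CW2019} delivers the claimed radicality and maximal height of $\text{Pf}_{2d+2}(X_G^{skew})$.
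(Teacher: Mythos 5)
The paper's own derivation of this corollary does not involve $\tpmd$ or $\hat{L}_G(d)$ at all, and this is the crux of where your proposal goes astray. Since $d \geq 2$, one has $2d \geq 4$ and $2d \geq \Delta(G)$, so Theorem \ref{Thm:UCI} applied with $2d$ in place of $d$ already shows that $L_G(2d)$ is a radical complete intersection, in both the unicyclic and bicyclic cases. The transfer result \cite[Proposition 7.7]{CW2019} takes as its hypothesis precisely the radicality/complete-intersection property of $L_G(2d)$, for an arbitrary (not necessarily bipartite) graph, and outputs that $\mathrm{Pf}_{2d+2}(X_G^{skew})$ is radical and of maximal height. That is exactly why the corollary's hypothesis reads $2d \geq \Delta(G)$: it is engineered to feed Theorem \ref{Thm:UCI} at level $2d$. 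Your reduction instead routes through $\hat{L}_G(d)$ and Theorem \ref{CI_pmd}, which forces you to prove the inequality $\tpmd(G) \leq \max\{\lceil \Delta(G)/2 \rceil, 2\}$ for unicyclic and bicyclic graphs --- a statement that is proved nowhere, neither in the paper nor in your sketch, and which carries essentially the entire mathematical weight of your argument.

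That inequality is a genuine gap, not a routine verification. You acknowledge that the order-sensitive condition (\ref{eq.matchcond}) is ``the main obstacle'' but resolve it only by asserting that a ``careful relabelling'' exists; the paper's Example \ref{eg.hq} shows concretely why packing two matching levels into one twisted pair can fail ($\{1,4\}$ and $\{2,3\}$ form a matching of that graph yet cannot share the pair $(M_1,M_2)$), so the claim that each twisted pair ``absorbs'' two matchings of the remaining forest is exactly what breaks in general. All the paper establishes about this invariant is $\lceil \Delta(G)/2 \rceil \leq \tpmd(G) \leq \pmd(G)$ (Proposition \ref{compare}), and the remark following it explicitly warns that $\tpmd$ is not $\lceil \pmd/2 \rceil$ in general (already $\tpmd(P_3) = 2 > 1$), signalling that sharp upper bounds of the kind you need are delicate. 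Two smaller issues: your appeal to Remark \ref{pos.match} on $H_q$ glosses over the second sign condition in Definition \ref{def.tpmd}, which demands negativity in \emph{both} orientations on unmatched pairs arising from remaining edges of $G$; and \cite[Proposition 7.7]{CW2019} is stated in terms of $L_G(2d)$, so invoking it with the radical complete intersection property of $\hat{L}_G(d)$ as input would itself require justification. The fix is simply to abandon the $\tpmd$ route and argue as the paper does, via Theorem \ref{Thm:UCI} at level $2d$.
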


\begin{remark}
    Note that Theorem \ref{Thm:UCI} fails to be true for $d=2$ and $d=3$, respectively. For example:  
    \begin{enumerate}
        \item  The ideal $L_{C_4}(2)$ is not a complete intersection by \cite[Theorem 3.5]{AK2021}. 
        \item  Let $G=K_{2,3}$ be a graph. Using Macaulay$2$ one can see that $\mu(L_G(3)) > \het(L_G(3))=5$, thus $L_G(3)$ is not a complete intersection.
    \end{enumerate}
\end{remark}

\subsection*{Conclusion} The proof of Theorem ~\ref{CI_LSS} follows from Lemma \ref{notCI} and Theorem \ref{Thm:UCI}.

\section{Almost Complete Intersection}

This section includes the necessary and sufficient conditions for LSS-ideals corresponding to trees and  $C_3$-free unicyclic (bicyclic) graphs to be almost complete intersections. We begin by giving a necessary condition for an LSS-ideal associated with a graph, in general, to be an almost complete intersection. 

\begin{lemma} \label{lemma:notCIACI}
    Let $G$ be a graph and $d$ be an integer such that $d < \Delta(G)-1$. Then $L_{G}(d)$ is not an almost complete intersection.
\end{lemma}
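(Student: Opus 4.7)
The plan is to adapt the argument from Lemma~\ref{notCI}(\ref{lem.CI1}) to obtain a stronger separation between $\mu(L_G(d))$ and $\het(L_G(d))$. Recall that for an ideal to be an almost complete intersection, the bare minimum numerical requirement is $\mu(L_G(d)) = \het(L_G(d)) + 1$; so it suffices to produce a prime containing $L_G(d)$ whose height is at most $\mu(L_G(d)) - 2$.

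First, I would fix a vertex $u \in V(G)$ of maximum degree, so that $\deg_G(u) = \Delta(G) \geq d + 2$ under our hypothesis $d < \Delta(G) - 1$. Mimicking the construction in Lemma~\ref{notCI}, I would set
\[
P_T = (x_{u1}, \ldots, x_{ud}) + Q_{G\setminus u},
\]
where $Q_{G\setminus u}$ is a minimal prime of $L_{G\setminus u}(d)$. Since the two summands involve disjoint sets of variables, $P_T$ is a prime ideal containing $L_G(d)$, and by \cite[Theorem 13.5]{M1989} one has $\het(Q_{G\setminus u}) \leq \mu(L_{G\setminus u}(d))$, giving
\[
\het(P_T) \leq d + \mu(L_{G\setminus u}(d)).
\]

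Next, I would count generators. Every edge of $G$ contributes exactly one minimal generator of $L_G(d)$, so
\[
\mu(L_G(d)) = |E(G)| = \deg_G(u) + |E(G \setminus u)| = \deg_G(u) + \mu(L_{G\setminus u}(d)).
\]
Combining the two displays and using $\deg_G(u) \geq d+2$,
\[
\het(L_G(d)) \leq \het(P_T) \leq d + \mu(L_G(d)) - \deg_G(u) \leq \mu(L_G(d)) - 2.
\]
Hence $\mu(L_G(d)) \geq \het(L_G(d)) + 2$, which already rules out $L_G(d)$ being an almost complete intersection, since that property would force $\mu(L_G(d)) = \het(L_G(d)) + 1$.

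There is essentially no obstacle here beyond carefully invoking the bound $\het(Q_{G\setminus u}) \leq \mu(L_{G\setminus u}(d))$ and the fact that the $x_{u\ell}$'s are disjoint from the variables of $Q_{G\setminus u}$; both ingredients are already exploited in the proof of Lemma~\ref{notCI}. The only novelty is promoting the inequality $\deg_G(u) \geq d+1$ used there to $\deg_G(u) \geq d+2$, which is precisely what the stronger hypothesis $d < \Delta(G) - 1$ provides, and which sharpens the numerical defect from $1$ to $2$.
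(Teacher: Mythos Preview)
Your proposal is correct and follows essentially the same route as the paper's own proof: pick a vertex $u$ with $\deg_G(u)\geq d+2$, form the prime $P_T=(x_{u1},\ldots,x_{ud})+Q_{G\setminus u}$ as in Lemma~\ref{notCI}, and observe that the extra unit in the degree of $u$ sharpens the height estimate to $\het(P_T)\leq \mu(L_G(d))-2$, ruling out the numerical condition $\mu=\het+1$ required for an almost complete intersection. The paper merely sketches this by saying ``the rest of the proof is similar to that of Lemma~\ref{notCI}''; your write-up is a faithful expansion of that sketch.
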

\begin{proof}
    Let $u$ be a vertex of $G$ with $\deg_G(u) \geq d+2$ 
    and set $T=\{u\}$. Then the rest of the proof is similar to that of Lemma \ref{notCI}.
\end{proof}

\begin{thm} \label{ACI:tree}
    Let $d$ be a positive integer. If $G$ is a tree on $[n]$ with $\Delta(G)>d$, then $L_G(d)$ is an almost complete intersection if and only if $G$ is obtained by adding an edge between two trees $H_1$ and $H_2$ with $V(H_i)=V(G)$ and $\Delta(H_i)\leq d$ for $i=1,2$.
\end{thm}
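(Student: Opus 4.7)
The plan is to exploit the fact that the hypothesis $\Delta(G) > d$ together with Lemma \ref{lemma:notCIACI} forces $\Delta(G) = d+1$ whenever $L_G(d)$ is an almost complete intersection, and then extract combinatorial information from the $\mu = \het + 1$ equality.

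For the forward direction, I would first prove that $G$ has at most two vertices of degree $d+1$, and that if two exist, they must be adjacent. Given two distinct such vertices $u, v$, apply the minimal prime construction of Lemma \ref{notCI} with $T = \{u,v\}$: the ideal
\[
P_T = (x_{u1}, \ldots, x_{ud}, x_{v1}, \ldots, x_{vd}) + Q_{G \setminus \{u,v\}},
\]
where $Q_{G \setminus \{u,v\}}$ is a minimal prime of $L_{G \setminus \{u,v\}}(d)$, is a prime containing $L_G(d)$ and, by Krull's height theorem, has height at most $2d + |E(G \setminus \{u,v\})|$. When $u, v$ are non-adjacent this bound becomes $2d + \bigl((n-1) - 2(d+1)\bigr) = n - 3$, forcing $\mu(L_G(d)) - \het(L_G(d)) \geq 2$, which contradicts the ACI assumption. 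Since a tree contains no triangle, three pairwise adjacent vertices are impossible, so at most two vertices can have degree $d+1$. In the case of a unique such vertex $u$, removing any edge $\{u,w\}$ splits $G$ into two subtrees of maximum degree $\leq d$; in the case of two adjacent such vertices $u,v$, removing the edge $\{u,v\}$ accomplishes the same. This yields the decomposition $G = H_1 \cup \{e\} \cup H_2$ with $\Delta(H_i) \leq d$.

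For the reverse direction, write $e = \{u,v\}$ for the connecting edge, set $J = L_{H_1}(d) + L_{H_2}(d)$, so that $L_G(d) = J + (f_e)$. By Remark \ref{R,CI,Prime}, each $L_{H_i}(d)$ is a radical complete intersection, and since $V(H_1) \cap V(H_2) = \emptyset$ they live in disjoint sets of variables; hence $J$ is a radical ideal generated by a homogeneous regular sequence of length $|E(H_1)| + |E(H_2)| = n-2$. Remark \ref{colonp} then yields $J : f_e = J : f_e^2$, and Remark \ref{CIorACI} shows that $L_G(d)$ is either a complete intersection or an almost complete intersection. Because $\Delta(G) = d+1 > d$, Lemma \ref{notCI} rules out the CI case, so $L_G(d)$ must be an almost complete intersection.

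The main obstacle I anticipate is the forward-direction structural analysis: the single-vertex version of the height estimate gives only $\mu - \het \leq 1$, which is consistent with ACI and produces no contradiction on its own; it is precisely the two-vertex non-adjacent case that generates the extra slack needed. Keeping careful track of edge counts after deletion, and confirming that the resulting subtrees inherit $\Delta \leq d$ from the adjacency conclusion, is the most delicate part. The reverse direction is an essentially formal application of the colon-ideal machinery (Remarks \ref{colonp} and \ref{CIorACI}) already in place.
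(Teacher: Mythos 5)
Your proposal is correct and takes essentially the same approach as the paper: the converse direction is verbatim the paper's argument (Remark \ref{R,CI,Prime} applied to the forest $G\setminus e$, then Remarks \ref{colonp} and \ref{CIorACI}, with Lemma \ref{notCI} excluding the complete intersection case), and the forward direction uses the same dichotomy — a vertex of degree $\ge d+2$ handled by Lemma \ref{lemma:notCIACI}, or two nonadjacent vertices of degree $d+1$ producing a prime $P_T \supseteq L_G(d)$ of height at most $n-3$, hence $\mu - \het \ge 2$, with the tree's triangle-freeness limiting the degree-$(d+1)$ vertices to at most two adjacent ones. The only (harmless, in fact slightly cleaner) deviation is in the two-vertex case: you bound the height of a minimal prime of $L_{G\setminus\{u,v\}}(d)$ directly by Krull's height theorem, exactly as in the paper's own Lemma \ref{notCI}, whereas the paper instead enlarges $T$ by a while loop until $\Delta(G\setminus T)\le d-1$ so that $L_{G\setminus T}(d)$ is itself prime — both computations yield the same bound $\het(P_T)\le n-3$.
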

\begin{proof}
    Suppose $G$ is obtained by adding an edge $e=\{u,v\}$ between $H_1$ and $H_2$, where $\Delta(H_1)\leq d$ and $\Delta(H_2)\leq d$. Then, $\deg_G(u)=d+1 \text{ or } \deg_G(v) = d+1$. From Remark \ref{R,CI,Prime}, it follows that $L_{G\setminus e}(d)$ is a radical complete intersection. Since $\Delta(G)>d$ from Lemma \ref{notCI}, Remark \ref{colonp} and Remark \ref{CIorACI}, it follows that $L_G(d)$ is an almost complete intersection.

    Now, assume that $G$ is not a graph obtained by adding an edge between $H_1$ and $H_2$. Then, either there exists a vertex $u$ such that $\deg_G(u) \geq d+2$ or there exist $v, w \in V(G)$ such that $ \deg_G(v)=d+1, \deg_G(w) = d+1$ and $\{v, w\} \notin E(G)$. We claim there exists a prime ideal $P \supseteq L_G(d)$ such that $\het(P)\leq n-3$. From this, it will follow that $\het(L_G(d)) \leq n-3$ and so $L_G(d)$ is not an almost complete intersection. 

\vspace{1mm}

    \textbf{Case I:} If there exists a vertex $u$ such that $\deg_G(u) \geq d+2$, then the claim follows from Lemma \ref{lemma:notCIACI}.

    \textbf{Case II:} If there exist $v, w \in V(G)$ such that $ \deg_G(v)=d+1, \deg_G(w) = d+1$ and $\{v, w\} \notin E(G)$, then set $T=\{v,w\}$. 
    \begin{equation} \label{eq:d}
    \begin{split}
        & \text{Input: } T \\
        & \text{WHILE }(G\setminus T \text{ has an vertex } u  \text{ such that } \deg_{G \setminus T}(u) \geq d) \\ 
        & \{ \\       
        & T= T \cup \{u\} \\     
        & \} \\
        & \text{RETURN } T
    \end{split}
\end{equation}

     From the construction of $T$, we have $\Delta(G\setminus T) \leq d-1$. Then from Remark \ref{R,CI,Prime}(c), $L_{G\setminus T}(d)$ is a prime ideal. We name the elements of $T$ as $v,w, v_1, \ldots, v_m$ and to this set, we associate an ideal $P_T$ given by 
     $$P_T=(x_{v1},\ldots, x_{vd},x_{w1},\ldots, x_{wd}, x_{v_11},\ldots x_{v_md})+L_{G\setminus T}(d).$$ Clearly, $P_T$ is a prime ideal containing $L_G(d)$. 
    
    Next, we compute the height of $P_T$. 
    From Remark \ref{R,CI,Prime}, $G \setminus T$ is a complete intersection. Hence,  
    $$\het(L_{G\setminus T}(d))=\mu(L_{G \setminus u}(d)) = |E(G)| - \deg_G(v)- \deg_{G\setminus v}(w) -\sum_{i=0}^{m-1}\deg_{G\setminus \{v,w,v_1,\ldots,v_i\}}(v_{i+1}).$$
    Then, 
    \begin{equation} \label{ACI_ht}
    \begin{split}
        \het(P_T) & = \het(x_{v1},\ldots, x_{vd},x_{w1},\ldots, x_{wd}, x_{v_11},\ldots x_{v_md}) + \het(L_{G\setminus T}(d)), \\
        & = d(m+2) + n-1 - \deg_G(v)-\deg_{G\setminus v}(w)-\sum_{i=0}^{m-1}\deg_{G\setminus \{v,w,v_1,\ldots,v_i\}}(v_{i+1}).
    \end{split} 
    \end{equation}
    By the construction of $T$, we have $\deg_{G}(v) = d+1, \deg_{G\setminus v}(w) = d+1$, and  $\deg_{G\setminus \{v,w,v_1,\ldots,v_i\}}(v_{i+1}) \geq d$, for $i={0,\ldots,m-1}$. Substituting these values in Equation (\ref{ACI_ht}), we get, $\het(P_T) \leq n-3$, as desired.
\end{proof}

Next, we move on to look at the almost complete intersection LSS-ideals coming corresponding to unicyclic (bicyclic) graphs.

\begin{thm} \label{ACI:unicyclic}
    Let $d$ be a positive integer. Let $G$ be a connected $C_3$-free unicyclic graph on $[n]$ with $\Delta(G)>d$ and $d \geq 3$. Then $L_G(d)$ is an almost complete intersection if and only if $G$ has one of the following forms:
    \begin{enumerate}
        \item \label{ACIa} $G$ is obtained by adding an edge between vertices of a tree $H$ with $V(H)=V(G)$ and $\Delta(H) \leq d$;
        \item \label{ACIb} $G$ is obtained by adding an edge between a tree $H$ and a unicyclic graph $U$ with $V(H)=V(G)$ and $\Delta(H) \leq d$, and $V(U)=V(G)$ and $\Delta(U) \leq d$.
    \end{enumerate}
\end{thm}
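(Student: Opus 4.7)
The plan is to prove the two directions by quite different means. The sufficiency direction follows the $L_{G}(d)=L_{G\setminus e}(d)+(f_{e}^{(d)})$ strategy already used in Theorem \ref{ACI:tree}, while for necessity I would imitate the $T$-construction used to manufacture a prime ideal over $L_{G}(d)$ of sub-maximal height.

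For sufficiency, assume $G$ is of one of the two prescribed forms and let $e$ denote the cycle-creating edge in form (\ref{ACIa}) or the bridge between the tree $H$ and the unicyclic component $U$ in form (\ref{ACIb}). In both cases I would show that $L_{G\setminus e}(d)$ is a radical complete intersection. In form (\ref{ACIa}) this is immediate from Remark \ref{R,CI,Prime}, while in form (\ref{ACIb}) one writes $L_{G\setminus e}(d)=L_{H}(d)+L_{U}(d)$ in disjoint sets of variables and combines Remark \ref{R,CI,Prime} with Theorem \ref{Thm:UCI}, which applies thanks to $d\geq 3$ and $\Delta(U)\leq d$. Once this is in hand, Remark \ref{colonp} delivers $L_{G\setminus e}(d):f_{e}^{(d)}=L_{G\setminus e}(d):(f_{e}^{(d)})^{2}$, and Remark \ref{CIorACI} forces $L_{G}(d)$ to be CI or ACI; the CI option is ruled out by $\Delta(G)>d$ combined with Lemma \ref{notCI}.

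For necessity, assume $L_{G}(d)$ is ACI but $G$ fits into neither form, aiming for a contradiction. Lemma \ref{lemma:notCIACI} together with $\Delta(G)>d$ pins down $\Delta(G)=d+1$; set $V_{\mathrm{high}}=\{v\in V(G):\deg_{G}(v)=d+1\}$. An edge-by-edge check shows that $G$ belongs to one of the two forms precisely when some edge of $G$ contains $V_{\mathrm{high}}$---a cycle edge realizes form (\ref{ACIa}) and a bridge realizes form (\ref{ACIb})---so the contrary assumption leaves only $|V_{\mathrm{high}}|=2$ with the two vertices non-adjacent, or $|V_{\mathrm{high}}|\geq 3$. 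The $C_{3}$-freeness of $G$ prevents $G[V_{\mathrm{high}}]$ from containing a triangle, so a non-adjacent pair $v,w\in V_{\mathrm{high}}$ always exists in the second case as well. Mimicking the construction of Theorem \ref{ACI:tree}, I would start with $T=\{v,w\}$ and iteratively adjoin vertices with $\deg_{G\setminus T}(\cdot)\geq d$ as in pseudocode (\ref{eq:d}), stopping when $\Delta(G\setminus T)\leq d-1$. The residue $G\setminus T$ is then either a forest or a unicyclic graph of maximum degree at most $d-1$; in either case $L_{G\setminus T}(d)$ is a complete intersection by Remark \ref{R,CI,Prime} or Theorem \ref{Thm:UCI}. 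Unmixedness of complete intersections lets me pick a minimal prime $Q$ of $L_{G\setminus T}(d)$ with $\het(Q)=|E(G\setminus T)|$, so that $P_{T}=(x_{ij}\mid i\in T,\, j\in[d])+Q$ is a prime ideal containing $L_{G}(d)$ with height $d|T|+|E(G\setminus T)|=d|T|+|E(G)|-|E(G,T)|$. The non-adjacent pair $v,w$ contributes $2(d+1)$ to $|E(G,T)|$ and every subsequent vertex contributes at least $d$ further edges, so $|E(G,T)|\geq d|T|+2$ and hence $\het(P_{T})\leq |E(G)|-2=\mu(L_{G}(d))-2$, contradicting the ACI identity $\het(L_{G}(d))=\mu(L_{G}(d))-1$. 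The main obstacle I foresee is guaranteeing the complete-intersection property of $L_{G\setminus T}(d)$ when $T$ absorbs no cycle vertex and the cycle of $G$ stays intact inside $G\setminus T$; the hypothesis $d\geq 3$ together with Theorem \ref{Thm:UCI} is exactly what is needed there, and passing to a minimal prime sidesteps the more delicate primality questions that would otherwise arise for small $d$.
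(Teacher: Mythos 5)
Your proposal is correct and follows essentially the same route as the paper: sufficiency via $L_G(d)=L_{G\setminus e}(d)+(f_e^{(d)})$ using Remark \ref{R,CI,Prime}, Theorem \ref{Thm:UCI}(1), Remarks \ref{colonp} and \ref{CIorACI}, and Lemma \ref{notCI} to exclude the complete intersection case; necessity via the same case analysis (at most two vertices of degree $d+1$, forced adjacent by $C_3$-freeness) and a prime $P_T\supseteq L_G(d)$ of height at most $n-2$. The one difference is cosmetic: the paper simply takes $T=\{u,v\}$ and bounds the height of a minimal prime of $L_{G\setminus T}(d)$ by its number of generators via Krull's height theorem, exactly as in Lemma \ref{notCI}, so your iterative enlargement of $T$ and the complete-intersection/unmixedness detour for $G\setminus T$ --- the ``obstacle'' you flag --- are unnecessary.
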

\begin{proof}
    Assume that $L_G(d)$ is an almost complete intersection. Then $\het(L_G(d))=\mu(L_G(d))-1 = n-1$. From Lemma \ref{lemma:notCIACI}, it follows that $G$ does not have a vertex with $\deg_G(u) \geq d+2$. Now, we claim that if $G$ has two distinct vertices $u, v \in V(G)$ such that $\deg_G(u) = d+1$ and $\deg_G(v) = d+1$, then $\{u,v\} \in E(G)$. Suppose $\{u, v\} \notin E(G)$. Then, setting $T=\{u,v\}$ and proceeding along the same lines as the proof of Lemma \ref{notCI}, one gets $\het(L_G(d)) \leq n-2$. This contradicts the fact that $\het(L_G(d))= n-1$. Therefore, we get $\{u,v\} \in E(G)$. This implies if $G$ has three vertices of degree $d+1$, then $G$ has $C_3$ as an induced subgraph. Thus, the number of vertices of degree $d+1$ is at most $2$, since $G$ is a $C_3$-free unicyclic graph.  Hence, $G$ is either of type-(\ref{ACIa}) or type-(\ref{ACIb}).

\vspace{2mm}

    Conversely, suppose $G$ is of type-(\ref{ACIa}) or type-(\ref{ACIb}). Then there exists an edge $e\in E(G)$ such that $L_G(d) = L_{G\setminus e}+f_e^{(d)}$ and $\Delta(G\setminus e) = d$. Since $G\setminus e$ is either a tree or a unicyclic graph, Remark \ref{R,CI,Prime} and Theorem \ref{Thm:UCI}$(1)$ implies $L_{G\setminus e}(d)$ is a radical complete intersection. Therefore, from Remark \ref{colonp}, Remark \ref{CIorACI} and Lemma \ref{notCI}, it follows that $L_{G}(d)$ is an almost complete intersection.
\end{proof}

\begin{cor}
   Let $G = G_1\cup \cdots \cup G_m$ be a union of disconnected unicyclic graphs. Then $L_G(d)$ is an almost complete intersection if and only if for some $i$, $L_{G_i}(d)$ is an almost complete intersection and for $j \neq i$, $L_{G_j}(d)$ are complete intersections.
\end{cor}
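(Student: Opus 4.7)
The plan is to use that the vertex-disjoint decomposition $G = G_1 \cup \cdots \cup G_m$ forces the LSS-ideals $L_{G_i}(d)$ to live in pairwise disjoint sets of variables, and to push the ACI property through this decomposition by additivity. Set $S_i = k[x_{vj} : v \in V(G_i),\, j \in [d]]$ so that $S = S_1 \otimes_k \cdots \otimes_k S_m$ and $L_G(d) = L_{G_1}(d)S + \cdots + L_{G_m}(d)S$. The generating binomials $f_e^{(d)}$ for edges in different components have pairwise disjoint supports, so
$$\mu(L_G(d)) = \sum_{i=1}^m \mu(L_{G_i}(d)).$$
From $S/L_G(d) \cong \bigotimes_i S_i/L_{G_i}(d)$ together with the dimension formula for tensor products of finitely generated $k$-algebras, one also obtains $\het(L_G(d)) = \sum_i \het(L_{G_i}(d))$.

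Setting $\delta_i := \mu(L_{G_i}(d)) - \het(L_{G_i}(d)) \geq 0$, the two additivities above give $\mu(L_G(d)) - \het(L_G(d)) = \sum_{i=1}^m \delta_i$. Hence the numerical part of the ACI condition, $\mu(L_G(d)) = \het(L_G(d)) + 1$, holds if and only if exactly one $\delta_{i_0}$ equals $1$ and $\delta_j = 0$ for $j \neq i_0$. Since $\delta_j = 0$ is precisely the CI condition on $L_{G_j}(d)$, this matches exactly the shape of the components demanded by the statement.

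For the minimal-prime part of the ACI definition, I would use that in the disjoint-variable setting the minimal primes of $L_G(d)$ are exactly the ideals $P = \sum_i \mathfrak{p}_i S$ with $\mathfrak{p}_i \in \Min(L_{G_i}(d))$, and that the localization $L_G(d)_P$ is generated in disjoint variables by the (extended) $L_{G_i}(d)_{\mathfrak{p}_i}$; consequently, $L_G(d)_P$ is a CI if and only if each $L_{G_i}(d)_{\mathfrak{p}_i}$ is a CI. For $j \neq i_0$ this is automatic whenever $L_{G_j}(d)$ is a CI, since CI ideals in polynomial rings are unmixed and therefore CI at every minimal prime. For the distinguished index $i_0$, ranging $\mathfrak{p}_{i_0}$ over $\Min(L_{G_{i_0}}(d))$ (while varying the other $\mathfrak{p}_j$ to sweep out all minimal primes of $L_G(d)$) shows that the condition is equivalent to $L_{G_{i_0}}(d)$ being ACI. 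Combining this with the numerical reduction of the previous paragraph yields both directions of the equivalence.

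The main obstacle is the disjoint-variable bookkeeping: justifying carefully that $\mu$, $\het$, the description of $\Min(L_G(d))$, and the behavior of localization all split additively along the components. These are standard facts for ideals in disjoint sets of variables in a polynomial ring, but they need to be stated precisely before being combined. Once they are in place, the proof reduces to the formal observation that a sum of nonnegative integers equals $1$ in exactly one way, together with the CI $\Leftrightarrow$ defect-zero and ACI $\Leftrightarrow$ (defect-one) $+$ (minimal-prime localizations are CI) characterizations.
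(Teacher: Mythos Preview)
Your proposal is correct and is precisely the argument the paper has in mind; the corollary is stated there without proof, relying implicitly on the disjoint-variable additivity you spell out. Two small refinements are worth recording. First, your description of $\Min(L_G(d))$ as the set of ideals $\sum_i \mathfrak{p}_i S$ with $\mathfrak{p}_i \in \Min(L_{G_i}(d))$ is only literally true when $k$ is algebraically closed (otherwise $\bigotimes_i \kappa(\mathfrak{p}_i)$ need not be a domain, so the sum need not be prime). What your argument actually needs---and what holds over any field by going-down for the flat inclusions $S_i \hookrightarrow S$---is that every $P \in \Min(L_G(d))$ contracts to some $\mathfrak{p}_i \in \Min(L_{G_i}(d))$ in each factor, and conversely that every such tuple lifts to at least one minimal prime of $L_G(d)$; the flat local base change $(S_{i_0})_{\mathfrak{p}_{i_0}} \to S_P/\sum_{j\neq i_0} L_{G_j}(d)S_P$ then transfers the complete-intersection condition in both directions exactly as you claim. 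Second, note that your argument nowhere uses that the $G_i$ are unicyclic, so you have in fact established the analogous statement for an arbitrary vertex-disjoint union of graphs.
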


\vspace{1mm}

\begin{remark}
Let $G$ be a graph on $[9]$. 

\begin{figure}[ht]
    \centering

\tikzset{every picture/.style={line width=0.75pt}}  

\begin{tikzpicture}[x=0.75pt,y=0.75pt,yscale=-1,xscale=1]

\draw   (443.54,172) -- (476.99,225.75) -- (406.99,225.72) -- cycle ;

\draw    (421.53,139.73) -- (443.54,172) ;

\draw    (463.53,140.73) -- (443.54,172) ;

\draw    (476.99,225.75) -- (516.53,207.73) ;

\draw    (476.99,225.75) -- (487.53,182.73) ;

\draw    (394.53,186.73) -- (406.99,225.72) ;

\draw    (369.53,206.73) -- (406.99,225.72) ;

\draw (447,163.5) node [anchor=north west][inner sep=0.75pt]  [font=\scriptsize]  {$1$};

\draw (476.99,226.75) node [anchor=north west][inner sep=0.75pt]  [font=\scriptsize]  {$2$};

\draw (406.99,226.72) node [anchor=north west][inner sep=0.75pt]  [font=\scriptsize]  {$3$};

\draw (420,125.5) node [anchor=north west][inner sep=0.75pt]  [font=\scriptsize]  {$4$};

\draw (464,126.5) node [anchor=north west][inner sep=0.75pt]  [font=\scriptsize]  {$5$};

\draw (487.53,182.73) node [anchor=north west][inner sep=0.75pt]  [font=\scriptsize]  {$6$};

\draw (516.53,208.73) node [anchor=north west][inner sep=0.75pt]  [font=\scriptsize]  {$7$};

\draw (396.53,175.73) node [anchor=north west][inner sep=0.75pt]  [font=\scriptsize]  {$9$};

\draw (361.53,207.73) node [anchor=north west][inner sep=0.75pt]  [font=\scriptsize]  {$8$};

\draw (430,240) node [anchor=north west][inner sep=0.75pt]  [font=\normalsize]  {$G$};

\filldraw[black] (421.53,139.73) circle (1.5pt) ;
\filldraw[black] (443.54,172) circle (1.5pt) ;
\filldraw[black] (463.53,140.73) circle (1.5pt) ;
\filldraw[black] (516.53,207.73) circle (1.5pt) ;
\filldraw[black] (476.99,225.75)  circle (1.5pt) ;
\filldraw[black] (487.53,182.73) circle (1.5pt) ;
\filldraw[black] (394.53,186.73) circle (1.5pt) ;
\filldraw[black] (406.99,225.72) circle (1.5pt) ;
\filldraw[black] (369.53,206.73)  circle (1.5pt) ;
\end{tikzpicture}
\end{figure}
From Macaulay$2$ computations, we get $L_G(3)$ to be an almost complete intersection. In fact, looking at the graphs of similar form (containing $C_3$), we observe a class of almost complete intersection LSS-ideals being associated with it. Therefore, with ample computational evidence, we ask the following question. 
\end{remark}

\begin{question} \label{triACI}
    Let $G$ be a connected unicyclic graph on $[n]$ and $d\geq 2$. If $G$ is obtained by attaching pendant vertices of $d-1$ trees $H_1,\ldots,H_{d-1}$ with $\Delta(H_i)\leq d$, where $i=1,\ldots,d-1$ to each vertex of $C_3$, is $L_G(d)$ an almost complete intersection?
\end{question}

If true, this, along with Theorem \ref{ACI:unicyclic} (dropping the assumption of $C_3$-free), will characterize unicyclic graphs whose associated LSS-ideals are almost complete intersections.

\begin{thm} \label{ACI:bicyclic}
    Let $d$ be a positive integer. Let $G$ be a connected $C_3$-free  bicyclic graph on $[n]$ with $\Delta(G)>d$ and $d \geq 4$. Then $L_G(d)$ is an almost complete intersection if and only if $G$ has one of the following forms:
    \begin{enumerate}
        \item \label{ACIbia} $G$ is obtained by adding an edge between vertices of a unicyclic graph $U$ with $V(U)=V(G)$ and $\Delta(U) \leq d$;
        \item \label{ACIbib} $G$ is obtained by adding an edge between unicyclic graphs $U_1$ and $U_2$ with $V(U_i)=V(G)$ and $\Delta(U_i) \leq d$ for $i=1,2$;
        \item \label{ACIbic} $G$ is obtained by adding an edge between a tree $H$ and a bicyclic graph $B$ with $V(H)=V(G)$ and $\Delta(H) \leq d$ and $V(B)=V(G)$ and $\Delta(B) \leq d$.
        
    \end{enumerate}
\end{thm}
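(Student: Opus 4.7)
The proof plan follows the same two-step structure as Theorem \ref{ACI:unicyclic}: first establish the necessary condition by bounding heights of prime ideals containing $L_G(d)$, then verify the sufficient condition via the colon-ideal machinery of Remarks \ref{colonp} and \ref{CIorACI}.

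\emph{Forward direction.} Assuming $L_G(d)$ is an ACI, I would use $|E(G)| = n+1$ (since $G$ is bicyclic on $n$ vertices) to get $\het(L_G(d)) = \mu(L_G(d)) - 1 = n$. Lemma \ref{lemma:notCIACI} combined with $\Delta(G) > d$ then forces $\Delta(G) = d+1$. Next I would show that any two degree-$(d+1)$ vertices $u, v$ must be adjacent in $G$: otherwise, setting $T = \{u,v\}$ and $P_T = (x_{u1}, \ldots, x_{ud}, x_{v1}, \ldots, x_{vd}) + Q_{G\setminus T}$ with $Q_{G\setminus T}$ a minimal prime of $L_{G\setminus T}(d)$, a computation parallel to Lemma \ref{notCI} gives $\het(P_T) \leq 2d + \mu(L_{G\setminus T}(d)) = 2d + (n+1) - 2(d+1) = n-1$, contradicting $\het(L_G(d)) = n$. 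Since $G$ is $C_3$-free, three pairwise-adjacent degree-$(d+1)$ vertices would produce a triangle, so $G$ has at most two vertices of degree $d+1$.

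\emph{Structural case analysis.} When $G$ has two degree-$(d+1)$ vertices $u, v$ joined by an edge $e$, removal of $e$ produces $G \setminus e$ with $\Delta(G \setminus e) \leq d$; examining whether $e$ lies on a cycle of $G$ or is a bridge, and (in the bridge case) how the two cycles distribute across the resulting components, matches $G$ to type (\ref{ACIbia}), (\ref{ACIbib}), or (\ref{ACIbic}) respectively. When $G$ has a single degree-$(d+1)$ vertex $v$, I would analyze the position of $v$ in the block decomposition of $G$: if $v$ lies on one of the two cycles, an incident cycle-edge $e$ makes $G \setminus e$ unicyclic with $\Delta \leq d$ (type (\ref{ACIbia})); if $v$ sits on a bridge path joining the two cycles, an incident bridge yields a unicyclic-unicyclic split (type (\ref{ACIbib})); if $v$ lies in a pendant tree hanging off the bicyclic core, a suitably chosen incident bridge yields a tree-bicyclic split (type (\ref{ACIbic})).

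\emph{Reverse direction.} If $G$ is of type (\ref{ACIbia}), (\ref{ACIbib}), or (\ref{ACIbic}), I would identify the distinguished edge $e$ so that $L_G(d) = L_{G\setminus e}(d) + (f_e^{(d)})$ with $\Delta(G \setminus e) \leq d$, and argue that $L_{G\setminus e}(d)$ is a radical complete intersection in each case: for type (\ref{ACIbia}), apply Theorem \ref{Thm:UCI}(1) directly (valid since $d \geq 4$); for type (\ref{ACIbib}), the ideal splits in disjoint sets of variables into two unicyclic LSS-ideals, each a radical CI by Theorem \ref{Thm:UCI}(1); for type (\ref{ACIbic}), the ideal splits into the LSS-ideal of a tree (radical CI by Remark \ref{R,CI,Prime}) and the LSS-ideal of a bicyclic graph (radical CI by Theorem \ref{Thm:UCI}(2), valid since $d \geq 4$). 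With $L_{G\setminus e}(d)$ a radical CI, Remark \ref{colonp} gives $L_{G\setminus e}(d) : f_e^{(d)} = L_{G\setminus e}(d) : (f_e^{(d)})^2$, and Remark \ref{CIorACI} then forces $L_G(d)$ to be either a CI or an ACI. Since $\Delta(G) = d+1 > d$, Lemma \ref{notCI} rules out CI, so $L_G(d)$ is an ACI.

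\emph{Main obstacle.} The principal technical difficulty is the forward-direction case analysis when $G$ has exactly one vertex $v$ of degree $d+1$: one must exhibit an incident edge $e$ whose removal puts $G \setminus e$ into one of the three prescribed forms. This demands careful use of the block structure of a $C_3$-free bicyclic graph and the bridge/cycle-edge dichotomy, together with the $C_3$-free hypothesis to rule out degenerate configurations. The reverse direction is essentially mechanical once the right edge $e$ is identified and the disjoint-variable splitting observation is made.
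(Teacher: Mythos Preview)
Your proposal is correct and follows exactly the paper's approach: the paper's proof of Theorem \ref{ACI:bicyclic} is the single sentence ``The proof is similar to that of Theorem \ref{ACI:unicyclic},'' and you have correctly unpacked that similarity, including the height bound $\het(P_T)\leq n-1$ for two non-adjacent degree-$(d+1)$ vertices and the appeal to Remark \ref{R,CI,Prime} and both parts of Theorem \ref{Thm:UCI} (the latter requiring $d\geq 4$) in the converse. Your structural case analysis for the single high-degree vertex is more explicit than anything the paper writes out, but it is the correct analysis.
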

\begin{proof}
    The proof is similar to that of Theorem \ref{ACI:unicyclic}.
\end{proof}

The following proposition is a consequence of isomorphisms mentioned in Remark \ref{Isomorphisms}(\ref{iso1}) and (\ref{iso2}).

\begin{prop} Let $d$ be an integer.
\begin{enumerate}
    \item For a subgraph $G$ of $K_{m,n}$ where $m,n \in \NN$, if $L_G(d)$ is an almost complete intersection, then height of $I_{d+1}(X_G^{gen})$ is one less than the maximal height.
    
    \item Let $G$ be a subgraph of $K_{n}$, where $n \in \NN $. If $L_G(d)$ is an almost complete intersection, then height of $I_{d+1}(X_G^{sym})$ is one less than the maximal height.
\end{enumerate}
\end{prop}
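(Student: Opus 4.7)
Both statements are proven by transferring the codimension information from $L_G(d)$ to the associated determinantal ideal through the isomorphisms in Remark \ref{Isomorphisms}, following the strategy used for the complete intersection case in \cite[Propositions 7.4, 7.5]{CW2019}. The plan for part (1) is to take Krull dimensions on both sides of the isomorphism
$$K[x_{ij}]/(I_{d+1}(X_G^{gen}) + (x_{ij}\mid\{i,j\} \in E)) \;\cong\; K[YZ]/(L_G(d)\cap K[YZ]).$$
The main analytic input is the invariant-theoretic dimension identity
$$\dim K[YZ]/(L_G(d)\cap K[YZ]) \;=\; \dim K[Y,Z]/L_G(d) - d^2,$$
which follows from the first fundamental theorem of invariant theory: the action of $GL_d$ on $K[Y,Z]$ via $(g,Y,Z) \mapsto (Yg^{-1}, gZ)$ has invariant ring $K[YZ]$, the ideal $L_G(d)$ is $GL_d$-invariant since its generators are entries of $YZ$, and the generic $GL_d$-orbit on $V(L_G(d))$ has maximal dimension $d^2$.

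Next, I would observe that $I_{d+1}(X_G^{gen})$ is a polynomial in the variables $\{x_{ij}\mid\{i,j\}\notin E\}$ only (since those edge entries of $X_G^{gen}$ are $0$), so the sum of ideals on the left has additive height:
$$\het\bigl(I_{d+1}(X_G^{gen}) + (x_e\mid e\in E)\bigr) \;=\; \het_{K[X_G^{gen}]}\!\bigl(I_{d+1}(X_G^{gen})\bigr) + |E(G)|.$$
Substituting $\dim K[Y,Z] = md+nd$ and $\dim K[x_{ij}] = mn$ into the dimension equation arising from the isomorphism, and simplifying, yields
$$\het_{K[X_G^{gen}]}\!\bigl(I_{d+1}(X_G^{gen})\bigr) \;=\; (m-d)(n-d) + \het L_G(d) - |E(G)|.$$
Since $L_G(d)$ is an almost complete intersection, $\het L_G(d) = |E(G)| - 1$, so the right-hand side equals $(m-d)(n-d) - 1$, one less than the maximal possible height $(m-d)(n-d)$ of the ideal of $(d+1)$-minors of a generic $m \times n$ matrix.

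For part (2), the argument is strictly parallel using Remark \ref{Isomorphisms}(\ref{iso2}). The role of $GL_d$ is taken by the orthogonal group $O_d$ acting on $K[Y]$ via $Y \mapsto Yg$; the first fundamental theorem for the orthogonal group gives $K[Y]^{O_d} = K[YY^T]$, and the dimension shift becomes $\dim O_d = \binom{d}{2}$. Since the maximal height of $I_{d+1}$ of a generic symmetric $n\times n$ matrix is $\binom{n-d+1}{2}$, the same bookkeeping yields
$$\het_{K[X_G^{sym}]}\!\bigl(I_{d+1}(X_G^{sym})\bigr) \;=\; \binom{n-d+1}{2} + \het L_G(d) - |E(G)|,$$
which becomes $\binom{n-d+1}{2} - 1$ under the ACI hypothesis.

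The expected obstacle lies in rigorously justifying the invariant-theoretic dimension formula, specifically the claim that the generic orbit of the reductive group ($GL_d$ or $O_d$) on $V(L_G(d))$ has maximal dimension; this is standard and is implicit in the proofs of \cite[Propositions 7.4, 7.5, 7.7]{CW2019} for the complete intersection case. Once that input is granted, the remainder of the proof is elementary height accounting, and the almost complete intersection hypothesis $\het L_G(d) = |E(G)| - 1$ produces the advertised drop of exactly one from the maximal height in each case.
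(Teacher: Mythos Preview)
Your proposal is correct and follows exactly the route the paper intends: the paper's proof is the single sentence ``a consequence of isomorphisms mentioned in Remark~\ref{Isomorphisms}(\ref{iso1}) and (\ref{iso2}),'' and you have spelled out precisely the dimension bookkeeping via the invariant-theoretic identities of \cite[Propositions 7.4, 7.5]{CW2019} that this sentence is pointing to. Your acknowledgment that the generic-orbit dimension claim is the one nontrivial input, and that it is handled in \cite{CW2019}, is appropriate and matches the paper's implicit reliance on that source.
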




From the results in this section, it thus follows that, for all graphs of the form mentioned in Theorem \ref{ACI:tree}, Theorem \ref{ACI:unicyclic}, and Theorem \ref{ACI:bicyclic}, the corresponding ideal of $d+1$-minors of the associated generic matrices attain one less than the maximal height.


\section{Regularity}

Let $G$ be a graph and $d$ be a positive integer. In this section, we first compute the regularity of powers of LSS-ideals corresponding to trees and unicyclic graphs with $ \Delta(G) \leq d$. This is followed by associating certain invariants of $G$ in terms of the cardinality of the edges of its induced subgraphs and giving lower bounds for the regularity of powers of the related LSS-ideals.
Further, bounds are given for the regularity of powers of almost complete intersection LSS-ideals. 

\vspace{1mm}

\begin{prop}\label{rem:CIreg}
    Let $G$ be a graph on $[n]$ with $ \Delta(G) \leq d$ and $d \geq 3$. 
    \begin{enumerate}
        \item If $G$ is a tree then for all $s\geq 1$, we have $\reg (S/L_{G}(d)^s) = 2s + n-3$.
        \item If $G$ is a connected unicyclic graph then we have $\reg (S/L_{G}(d)^s) = 2s + n-2$, for all $s\geq 1$.
    \end{enumerate}
\end{prop}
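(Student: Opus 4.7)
The plan is to recognize both cases as instances of powers of complete intersections generated by homogeneous polynomials of degree $2$, and then apply Remark \ref{regular} directly.

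For part (1), since $G$ is a tree on $[n]$ with $\Delta(G)\leq d$, Remark \ref{R,CI,Prime}(b) tells us that $L_G(d)$ is a complete intersection. The generators $f_e^{(d)}$, $e\in E(G)$, are therefore a regular sequence of $|E(G)|=n-1$ homogeneous polynomials of degree $2$. Specializing Remark \ref{regular} with generator degree $2$ and $n-1$ generators gives
\[
\reg(L_G(d)^s)=2s+(2-1)((n-1)-1)=2s+n-2,
\]
and hence $\reg(S/L_G(d)^s)=2s+n-3$ for every $s\geq 1$.

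For part (2), $G$ is a connected unicyclic graph on $[n]$ with $\Delta(G)\leq d$ and $d\geq 3$, so $|E(G)|=n$. By Theorem \ref{Thm:UCI}(1), the ideal $L_G(d)$ is a radical complete intersection; thus its $n$ generators of degree $2$ form a regular sequence. A second application of Remark \ref{regular} yields
\[
\reg(L_G(d)^s)=2s+(2-1)(n-1)=2s+n-1,
\]
from which $\reg(S/L_G(d)^s)=2s+n-2$ for all $s\geq 1$.

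Both assertions are therefore immediate once the complete intersection property is invoked; there is no real obstacle, only the bookkeeping of relating $|E(G)|$ to $n$ in each case (one fewer generator for trees than for unicyclic graphs accounts for the difference between $2s+n-3$ and $2s+n-2$). The hypothesis $d\geq 3$ is used only in part (2) so that Theorem \ref{Thm:UCI}(1) applies; in part (1), $d\geq 3$ plays no role beyond what is already implied by $\Delta(G)\leq d$.
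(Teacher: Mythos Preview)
Your proof is correct and follows exactly the paper's approach: invoke Remark \ref{R,CI,Prime} (for trees) and Theorem \ref{Thm:UCI}(1) (for unicyclic graphs) to obtain the complete intersection property, then apply Remark \ref{regular}. You have simply spelled out the arithmetic that the paper leaves implicit.
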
 

\begin{proof}
    From Remark \ref{R,CI,Prime} and Theorem \ref{Thm:UCI}$(1)$, it follows that $L_G(d)$ is a complete intersection. Then the statement is a consequence of Remark \ref{regular}.
\end{proof}

\begin{prop} \label{pro:ij}
    Let $G$ be a graph and $H$ be its induced subgraph. Then for all $i, j \geq 0$ and $s \geq 1$, $$ \beta_{i,j}(S/L_H(d)^s) \leq \beta_{i,j}(S/L_G(d)^s).$$
\end{prop}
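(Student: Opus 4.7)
The plan is to exploit the natural $\ZZ^{V(G)}$-multigrading on $S$ defined by $\deg(x_{ij}) = \varepsilon_i$, under which every generator $f_e^{(d)}$ of $L_G(d)$, with $e = \{i,j\}$, is multihomogeneous of multidegree $\varepsilon_i + \varepsilon_j$. Consequently $L_G(d)^s$ and $L_H(d)^s$ are multigraded ideals and one may take a minimal \emph{multigraded} free $S$-resolution $F_\bullet \to S/L_G(d)^s \to 0$. Write $S_H := k[x_{ij} : i \in V(H),\, j \in [d]] \subseteq S$, so that a multidegree $\mu \in \NN^{V(G)}$ is supported on $V(H)$ precisely when the corresponding homogeneous component lies in $S_H$.

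The central algebraic input is the intersection identity
\[
L_G(d)^s \cap S_H \;=\; L_H(d)^s.
\]
The inclusion $\supseteq$ is immediate. For $\subseteq$, any element of $(L_G(d)^s)_\mu$ is a $k$-linear combination of terms of the form $m \cdot f_{e_1}^{(d)} \cdots f_{e_s}^{(d)}$ with $m$ a monomial of $S$ and $e_k \in E(G)$. Matching multidegrees forces both $\deg(m)$ and $\sum_k(\varepsilon_{i_k}+\varepsilon_{j_k})$ to be supported on $V(H)$ whenever $\mu$ is; hence $m \in S_H$ and each $e_k$ has both endpoints in $V(H)$. Because $H$ is an \emph{induced} subgraph, this last condition is equivalent to $e_k \in E(H)$, so every summand already lies in $L_H(d)^s$. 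This is the only place where the inducedness hypothesis is essential.

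Next, the functor of restricting a multigraded module to those strands whose multidegree is supported on $V(H)$ is exact. Applying it to $F_\bullet$ gives an exact augmented complex. Since $(S(-\mu))_{V(H)} \cong S_H(-\mu)$ as $S_H$-modules when $\mu$ is supported on $V(H)$, and is zero otherwise, the restricted complex $(F_\bullet)_{V(H)}$ is an $S_H$-free resolution whose $S_H$-ranks equal the corresponding multigraded Betti numbers $\beta^S_{i,\mu}(S/L_G(d)^s)$ of $(S/L_G(d)^s)_{V(H)}$, which by the intersection identity equals $S_H/L_H(d)^s$. Comparing with a minimal $S_H$-resolution yields
\[
\beta^{S_H}_{i,\mu}\bigl(S_H/L_H(d)^s\bigr) \;\le\; \beta^{S}_{i,\mu}\bigl(S/L_G(d)^s\bigr) \qquad \text{for every $\mu$ supported on $V(H)$.}
\]

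Finally, summing over multidegrees with $|\mu|=j$ and noting that the multigraded Betti numbers of $S_H/L_H(d)^s$ vanish outside $V(H)$-supported multidegrees, one obtains $\beta^{S_H}_{i,j}(S_H/L_H(d)^s) \le \beta^{S}_{i,j}(S/L_G(d)^s)$. Because the variables $x_{ij}$ with $i \notin V(H)$ do not appear in any generator of $L_H(d)^s$, the flat polynomial extension $S_H \hookrightarrow S$ preserves graded Betti numbers, giving $\beta^{S}_{i,j}(S/L_H(d)^s) = \beta^{S_H}_{i,j}(S_H/L_H(d)^s)$. Combining these identities yields the claimed inequality. The main obstacle is the intersection identity; once that is established, the rest is formal multigraded bookkeeping on the minimal resolution of $S/L_G(d)^s$.
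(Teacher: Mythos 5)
Your proof is correct, and its first half coincides with the paper's: both rest on the identity $L_G(d)^s \cap S_H = L_H(d)^s$, and your multidegree-support argument is the paper's proof in disguise --- the substitution map $\phi$ used there (sending $x_{ij}\mapsto 0$ for $i\notin V(H)$ and fixing the other variables) is precisely the projection onto the $V(H)$-supported strand of your $\ZZ^{V(G)}$-grading, and inducedness of $H$ enters at the same point, to convert ``both endpoints in $V(H)$'' into $e_k\in E(H)$. Where you genuinely diverge is the passage from this identity to the Betti inequality. The paper observes that $S_H/L_H(d)^s$ is an algebra retract of $S/L_G(d)^s$ (via the inclusion and the map $\bar\phi$ induced by $\phi$) and then invokes \cite[Corollary 2.5]{OHH2000}; you instead re-prove the needed inequality directly by restricting a minimal multigraded free resolution of $S/L_G(d)^s$ to the $V(H)$-supported multidegrees, using that this restriction is exact, annihilates $S(-\mu)$ when $\mu$ is not supported on $V(H)$, and sends the remaining summands to $S_H(-\mu)$, so that the restricted complex is a (possibly non-minimal) $S_H$-free resolution of $S_H/L_H(d)^s$; comparing with a minimal resolution gives the bound. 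Your route is self-contained, avoids the external citation, and yields the finer multigraded inequality $\beta^{S_H}_{i,\mu}\le\beta^{S}_{i,\mu}$ for each $\mu$ supported on $V(H)$, from which the graded statement follows by summing over $|\mu|=j$; the paper's route is shorter and works verbatim in any algebra-retract situation, without requiring the generators to be simultaneously multihomogeneous. You also make explicit the routine flat-extension step identifying $\beta^{S_H}_{i,j}(S_H/L_H(d)^s)$ with $\beta^{S}_{i,j}(S/L_H(d)^s)$, which the paper leaves implicit.
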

\begin{proof}
    Let $H$ be an induced subgraph of $G$ and $S_H=k[x_{ij} \mid i \in V(H), \text{ and } j \in [d]]$. First, we claim that $L_H(d)^s = L_G(d)^s \cap S_H$ for all $s \geq 1$, where $L_H(d)$ is a LSS-ideal of $H$ in $S_H$. We have $L_H(d)^s\subseteq L_G(d)^s\cap S_H$, since generators of $L_H(d)^s$ are contained in $L_G(d)^s$. For other side inclusion, consider the following map $\phi : S \rightarrow S_H$ by setting $\phi(x_{ij})=0$, if $x_{ij}\notin V(H)$ and $\phi(x_{ij})=x_{ij}$, if $x_{ij}\in V(H)$. Let $g=\sum_{e_1,\ldots,e_s \in E(G)}r_{e_1,\ldots,e_s}f_{e_1}^{(d)}\cdots f_{e_s}^{(d)} \in L_G(d)^s$, where $r_{e_1,\ldots,e_s} \in S$. Note that $\phi(g) = g$, if $g\in S_H$. Thus, we get
    \begin{equation*}
        \begin{split}
            g &= \sum_{e_1,\ldots,e_s \in E(G)}\phi(r_{e_1,\ldots,e_s})\phi(f_{e_1}^{(d)}\cdots f_{e_s}^{(d)}), \\
            &= \sum_{e_1,\ldots,e_s \in E(H)}\phi(r_{e_1,\ldots,e_s})f_{e_1}^{(d)}\cdots f_{e_s}^{(d)}.
        \end{split}
    \end{equation*}
    Therefore, $g \in L_H(d)^s$.  Now, we claim that ${S_H}/{L_H(d)^s}$ is an algebra retract of ${S}/{L_G(d)^s}$. Then the statement follows from \cite[Corollary 2.5]{OHH2000}. Consider, ${S_H}/{L_H(d)^s} \xhookrightarrow{i} {S}/{L_G(d)^s} \xrightarrow{\bar{\phi}} {S_H}/{L_H(d)^s}$, where $\bar{\phi}$ is an induced by the map $\phi$. Then one can see that $\bar{\phi} \circ i$ is identity on ${S_H}/{L_H(d)^s}$ and hence the claim.
\end{proof}

\begin{notation} \label{maxind}
    Let $d\geq 3$ and $G$ be a graph. We define two invariants of $G$ in the following way:
    \begin{enumerate}[(a)]
        \item $\mathfrak{t}(G) = \max\{|E(H)| \mid H$ is an induced subgraph of  $G$ such that $H$ is a forest with $\Delta(H) \leq d\}$.
        \item $\mathfrak{u}(G) = \max\{|E(H)| \mid H$ is an induced subgraph of  $G$ such that  $H$  is a unicyclic graph with $\Delta(H) \leq d\}$.
    \end{enumerate}   
\end{notation}

\begin{cor} \label{Cor:lower}
    Let $G$ be a graph. Then one has $$\reg(S/L_G(d)^s) \geq 2(s-1)+\max\{\mathfrak{t}(G),\mathfrak{u}(G)\},$$ (see Notation \ref{maxind}) for all $s\geq 1$. 
\end{cor}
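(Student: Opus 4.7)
The plan is to deduce the inequality directly from Proposition~\ref{pro:ij} combined with the regularity formula for regular sequences of quadrics, namely Remark~\ref{regular}. Since $\reg$ can be read off the non-vanishing graded Betti numbers, the comparison $\beta_{i,j}(S_H/L_H(d)^s)\le \beta_{i,j}(S/L_G(d)^s)$ supplied by Proposition~\ref{pro:ij} immediately gives $\reg(S_H/L_H(d)^s)\le \reg(S/L_G(d)^s)$ for every induced subgraph $H$ of $G$. Here $S_H$ denotes the polynomial subring of $S$ on the variables $x_{ij}$ with $i\in V(H)$.

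First I would pick $H$ to be an induced subgraph of $G$ realizing $\max\{\mathfrak{t}(G),\mathfrak{u}(G)\}$; by the very definition in Notation~\ref{maxind}, either $H$ is a forest with $|E(H)|=\mathfrak{t}(G)$ and $\Delta(H)\le d$, or $H$ is unicyclic with $|E(H)|=\mathfrak{u}(G)$ and $\Delta(H)\le d$. In the forest case, Remark~\ref{R,CI,Prime}(b) gives that $L_H(d)$ is a complete intersection; in the unicyclic case the standing hypothesis $d\ge 3$ (built into Notation~\ref{maxind}) places us in the scope of Theorem~\ref{Thm:UCI}(1), so $L_H(d)$ is again a (radical) complete intersection. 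In either situation, the natural generators $f_e^{(d)}$, $e\in E(H)$, form a regular sequence of $|E(H)|$ homogeneous polynomials of degree $2$ in $S_H$.

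Then I would apply Remark~\ref{regular} with generator degree $2$ and $n=|E(H)|$, which yields
\[
\reg\bigl(L_H(d)^s\bigr)=2s+|E(H)|-1,\qquad\text{hence}\qquad \reg\bigl(S_H/L_H(d)^s\bigr)=2(s-1)+|E(H)|.
\]
Combining with the regularity comparison drawn from Proposition~\ref{pro:ij} gives $\reg(S/L_G(d)^s)\ge 2(s-1)+|E(H)|$, and maximizing over the two admissible choices of $H$ produces the stated lower bound.

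There is no real obstacle here: the corollary is essentially a bookkeeping combination of previously established results. The only points that warrant a line each are noting that the Betti-number comparison in Proposition~\ref{pro:ij} does pass to a regularity comparison (immediate from $\reg(M)=\max\{j-i:\beta_{i,j}(M)\ne 0\}$), and that the hypothesis $d\ge 3$ ensures Theorem~\ref{Thm:UCI}(1) applies in the unicyclic case.
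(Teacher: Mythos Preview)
Your proposal is correct and follows essentially the same approach as the paper, which simply cites Proposition~\ref{rem:CIreg} and Proposition~\ref{pro:ij}. You have merely unpacked Proposition~\ref{rem:CIreg} into its constituents (Remark~\ref{R,CI,Prime}, Theorem~\ref{Thm:UCI}(1), and Remark~\ref{regular}); this has the minor advantage of directly covering the forest and not-necessarily-connected unicyclic cases that appear in Notation~\ref{maxind}, whereas Proposition~\ref{rem:CIreg} is literally stated only for trees and connected unicyclic graphs.
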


\begin{proof}
    The assertion follows from Proposition \ref{rem:CIreg} and Proposition \ref{pro:ij}.
\end{proof}


\begin{thm}\label{thm4:t}
    Let $G$ be a tree on $[n]$. If $L_G(d)$ is an almost complete intersection, then for all $s\geq 1$, one has 
    $$2s + n-4 \leq \reg (S/(L_G(d))^s) \leq 2(s-1) + \reg (S/L_{G}(d).$$
\end{thm}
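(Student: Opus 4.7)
The plan is to exploit the almost complete intersection decomposition given by Theorem~\ref{ACI:tree}: pick an edge $e$ of $G$ such that $J := L_{G\setminus e}(d)$ is a complete intersection of $n-2$ quadrics (Remark~\ref{R,CI,Prime}, since $G\setminus e$ is a forest with $\Delta\le d$), write $L_G(d)=J+(f)$ with $f:=f_e^{(d)}$, and record the colon equality $J:f = J:f^2$ coming from the ACI hypothesis via Remarks~\ref{colonp} and \ref{CIorACI}. Note that Remark~\ref{regular}, applied to the CI $J$, gives $\reg(S/J^s) = 2s + n - 4$ for every $s\ge 1$.

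For the upper bound, the colon equality together with the fact that $J$ is generated by a regular sequence implies that the $n-1$ generators of $L_G(d)$ form a $d$-sequence (with the first $n-2$ forming a regular sequence). Applying Remark~\ref{Reg.upper} with all $d_i=2$ yields
\[
\reg(S/L_G(d)^s) \;\le\; 2(s-1) + \max\bigl\{\reg(S/L_G(d)),\; 2(n-2)-(n-1)\bigr\} \;=\; 2(s-1) + \max\bigl\{\reg(S/L_G(d)),\, n-3\bigr\}.
\]
The max collapses to $\reg(S/L_G(d))$ as soon as $\reg(S/L_G(d))\ge n-3$, which is implied by the $s=1$ instance of the lower bound established below.

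For the lower bound, I would use the containment $J^s\subseteq L_G(d)^s$ and the short exact sequence
\[
0 \to L_G(d)^s/J^s \to S/J^s \to S/L_G(d)^s \to 0.
\]
The general regularity inequality $\reg(S/J^s)\le \max\{\reg(L_G(d)^s/J^s),\,\reg(S/L_G(d)^s)\}$ forces the conclusion $\reg(S/L_G(d)^s)\ge \reg(S/J^s) = 2s+n-4$ provided we can prove $\reg(L_G(d)^s/J^s) < 2s+n-4$. To achieve this, filter $L_G(d)^s/J^s$ by the chain of submodules $\sum_{k\ge i}f^k J^{s-k}/J^s$ coming from $L_G(d)^s = \sum_{k=0}^s f^k J^{s-k}$; each successive quotient is a shift of a quotient of $S/(J^{s-k+1}:f^k)$, and iterating the ACI colon collapse $J:f = J:f^2$ (which propagates to $J^s:f^k = J^{s-k}(J:f)$ since $J$ is CI) reduces the analysis to bounding $\reg(S/(J:f))$. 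The latter is controlled via the exact sequence $0\to (S/(J:f))(-2)\to S/J\to S/L_G(d)\to 0$ together with $\reg(S/J)=n-2$, yielding $\reg(S/(J:f))\le n-4$ and hence the desired strict inequality.

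The main obstacle is this filtration-and-colon analysis of $L_G(d)^s/J^s$: the softer induced-subgraph machinery of Corollary~\ref{Cor:lower} is insufficient here, since in general $\mathfrak{t}(G)$ can be as small as $n-3$ for ACI trees (for instance, when the unique degree-$(d+1)$ vertex has no leaf neighbor, or when $G$ has two adjacent vertices of degree $d+1$), giving only $\reg(S/L_G(d)^s)\ge 2s+n-5$. Consequently the ACI-specific colon relation $J:f = J:f^2$ must be exploited directly rather than absorbed into the induced-subgraph bound.
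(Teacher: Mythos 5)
Your upper bound is exactly the paper's argument ($L_G(d)$ is generated by a $d$-sequence of quadrics whose first $n-2$ members form a regular sequence, then Remark~\ref{Reg.upper}), and your bookkeeping is actually more explicit than the paper's: Remark~\ref{Reg.upper} gives $2(s-1)+\max\{\reg(S/L_G(d)),\,n-3\}$, and discarding the $n-3$ requires $\reg(S/L_G(d))\geq n-3$, which only comes from the $s=1$ lower bound --- so your upper bound inherits the status of your lower bound. For the lower bound the paper takes precisely the route you rejected: it invokes Corollary~\ref{Cor:lower} with the claim $\mathfrak{t}(G)=n-2$ on the grounds that $G\setminus e$ is a complete intersection forest. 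Your objection is well taken: $G\setminus e$ is a spanning, non-induced subgraph, while Notation~\ref{maxind} and Proposition~\ref{pro:ij} require induced subgraphs (the retract proof of Proposition~\ref{pro:ij} needs $L_H(d)^s=L_G(d)^s\cap S_H$, which fails for spanning subgraphs since then $S_H=S$), and your examples (a unique degree-$(d+1)$ vertex with no leaf neighbor, or two adjacent degree-$(d+1)$ vertices) do force $\mathfrak{t}(G)=n-3$. So you have correctly located the soft spot in the published argument.

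However, your replacement argument does not close the gap. First, the colon identity $J^s:f^k=J^{s-k}(J:f)$ is asserted, not proved; it is not a formal consequence of $J:f=J:f^2$, and even the inclusion $J^{s-k}(J:f)\subseteq J^s:f^k$ is unclear, since $J^{s-k}(J:f)f^k\subseteq J^{s-k+1}f^{k-1}$ and nothing lets you absorb the remaining powers of $f$ into $J$. Second, the bound $\reg(S/(J:f))\leq n-4$ is circular: from $0\to (S/(J:f))(-2)\to S/J\to S/L_G(d)\to 0$ one gets only $\reg(S/(J:f))+2\leq \max\{\reg(S/J),\,\reg(S/L_G(d))+1\}$, which involves the very quantity being bounded, and even granting the theorem ($\reg(S/L_G(d))\geq n-2$) it yields at best $\reg(S/(J:f))\leq n-3$. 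Third, and decisively, the strictness you need, $\reg(L_G(d)^s/J^s)<2s+n-4$, can be outright false: at $s=1$ it reads $\reg(S/(J:f))\leq n-5$, which is impossible for small ACI trees --- e.g.\ $G=K_{1,3}$, $d=2$, $n=4$ is ACI by Theorem~\ref{ACI:tree}, and there $f\notin J$ gives $J:f\neq S$, hence $\reg(S/(J:f))\geq 0>n-5=-1$. When $\reg(L_G(d)^s/J^s)\geq \reg(S/J^s)$ your exact sequence is inconclusive in the direction you want (in the strict case it instead gives $\reg(S/L_G(d)^s)\geq \reg(L_G(d)^s/J^s)-1$), so a genuinely finer analysis is required. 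In short: your upper bound is sound (and coincides with the paper's), your lower bound is not established --- and, as you rightly suspected, the paper's own one-line justification of the lower bound is incomplete for exactly the induced-versus-arbitrary subgraph reason you identified.
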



\begin{proof}
    Suppose $G$ is a tree with $L_G(d)$ being an almost complete intersection. From Theorem \ref{ACI:tree},  $G$ is obtained by adding an edge $e$ between two complete intersection trees. Thus, $G\setminus e$ is a complete intersection and so $\mathfrak{t}(G)=n-2$. From Corollary \ref{Cor:lower}, it then follows that, $2s + n-4 \leq \reg (S/(L_G(d))^s)$, for all $s\geq 1$. Hence, we have the desired lower bound. 

    Now, since an almost complete intersection ideal is generated by a $d$-sequence and $L_G(d)$ is generated in degree $2$, the upper bound follows from Remark \ref{Reg.upper}.
\end{proof}

\begin{thm} \label{thm4:ub}
Let $G$ be a connected graph on $[n]$. Let $T$ be a tree with $\Delta(T) \leq d$ and $V(T)=V(G)$, $U_1$ and $U_2$ be unicyclic graphs with with $\Delta(U_i) \leq d$, and $V(U_i)=V(G)$ for $i=1,2$, and $B$ be a bicyclic graph with $\Delta(B) \leq d$ and $V(B)=V(G)$. If $G$ is a connected graph on $[n]$ of one of the following forms: 
    \begin{enumerate}
        \item $G$ is obtained by adding an edge between two vertices of $T$ and $d \geq 3$;
        \item $G$ is obtained by adding an edge between a vertex of $T$ and a vertex of  $U_1$ and $d \geq 3$;
        \item $G$ is obtained by adding an edge between two vertices of $U_1$ and $d \geq 4$;
        \item $G$ is obtained by adding an edge between a vertex of $U_1$ and a vertex of $U_2$ and $d \geq 4$;
        \item  $G$ is obtained by adding an edge between a vertex of $T$ and a vertex of $B$ and $d \geq 4$.
    \end{enumerate}
    Then for all $s\geq 1$, one has 
    $$2s + n-3 \leq \reg (S/(L_G(d))^s) \leq 2(s-1) + \max\{\reg (S/L_{G}(d)), n-1\}.$$
\end{thm}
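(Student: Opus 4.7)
I would establish the two inequalities separately, closely following the strategy used in the proof of Theorem \ref{thm4:t}.

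\emph{Upper bound.} In each of the five configurations, $G$ matches the characterization of either Theorem \ref{ACI:unicyclic} (cases (1) and (2)) or Theorem \ref{ACI:bicyclic} (cases (3)--(5)), so $L_G(d)$ is an almost complete intersection of the form $L_G(d) = L_{G\setminus e}(d) + (f_e^{(d)})$. The graph $G \setminus e$ satisfies the hypothesis of either Remark \ref{R,CI,Prime} or Theorem \ref{Thm:UCI}, so $L_{G\setminus e}(d)$ is a radical complete intersection. Being radical, Remark \ref{colonp} gives $L_{G\setminus e}(d):f_e^{(d)} = L_{G\setminus e}(d):(f_e^{(d)})^2$, and combining this with the regular-sequence generators of $L_{G\setminus e}(d)$ shows (as in Remark \ref{CIorACI}) that the degree-two generators of $L_G(d)$ form a $d$-sequence in which the first $\mu(L_G(d)) - 1$ members constitute a regular sequence. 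Since $\mu(L_G(d)) = |E(G)| \in \{n, n+1\}$, applying Remark \ref{Reg.upper} with $d_i = 2$ yields
\[
\reg(S/L_G(d)^s) \leq 2(s-1) + \max\{\reg(S/L_G(d)),\ \mu(L_G(d)) - 2\} \leq 2(s-1) + \max\{\reg(S/L_G(d)),\ n-1\},
\]
which is the stated upper bound.

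\emph{Lower bound.} I would invoke Corollary \ref{Cor:lower}: it suffices to show that $\max\{\mathfrak{t}(G), \mathfrak{u}(G)\} \geq n-1$, so that $\reg(S/L_G(d)^s) \geq 2(s-1) + (n-1) = 2s + n - 3$. The crucial observation is that, in every one of the five cases, deleting the bridge $e$ exhibits $G$ as a union of complete-intersection pieces---a tree in case (1); a tree together with a unicyclic graph in case (2); a unicyclic graph in case (3); two unicyclic graphs in case (4); and a tree together with a bicyclic graph in case (5)---each of which has maximum degree at most $d$. This is precisely the step used in the proof of Theorem \ref{thm4:t} to extract $\mathfrak{t}(G) = n-2$ from the two complete-intersection trees comprising $G \setminus e$. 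Here $G \setminus e$ has $n-1$ edges in cases (1)--(2) and $n$ edges in cases (3)--(5); in each instance an appropriate sub-collection of these pieces---after possibly discarding a suitable leaf in order to correct the maximum degree---provides an induced forest or unicyclic subgraph of $G$ with $\Delta \leq d$ and at least $n-1$ edges, so that $\max\{\mathfrak{t}(G), \mathfrak{u}(G)\} \geq n-1$, as required.

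\emph{Main obstacle.} The principal difficulty is the lower bound's case-by-case combinatorial verification. Because $G \setminus e$ is in general a spanning but non-induced subgraph of $G$, some additional care is needed to produce a genuinely induced forest or unicyclic subgraph of $G$ realising the claimed edge count with $\Delta \leq d$; the bicyclic configurations (3)--(5) are the most delicate since $G$ has one extra edge than in the unicyclic setting, which typically forces an extra vertex deletion before the regularity formulas from Proposition \ref{rem:CIreg} and Theorem \ref{Thm:UCI} apply directly through Proposition \ref{pro:ij}.
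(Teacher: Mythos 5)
Your proposal is correct and follows the paper's own proof essentially verbatim: the paper likewise deduces the upper bound from the almost-complete-intersection ($d$-sequence, degree-two) structure of $L_G(d)$ via Remark \ref{Reg.upper}, and the lower bound from Corollary \ref{Cor:lower}, i.e., from exhibiting an induced complete-intersection subgraph with at least $n-1$ edges. The case-by-case combinatorial verification that $\max\{\mathfrak{t}(G),\mathfrak{u}(G)\}\geq n-1$, which you rightly flag as the delicate step (since $G\setminus e$ is spanning but not induced), is in fact left entirely implicit in the paper's two-line proof, so your write-up is, if anything, more explicit than the published argument.
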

\begin{proof}
    From Theorem \ref{ACI:unicyclic} and Theorem \ref{ACI:bicyclic}, it follows that $G$ is obtained by adding an edge to a graph whose LSS-ideal is a complete intersection ideal. Also, $L_G(d)$ is generated in degree $2$. Therefore, the lower and the upper bounds follow from Corollary \ref{Cor:lower} and Remark \ref{Reg.upper}, respectively.
\end{proof}

\vspace{2mm}

\end{document}